\documentclass[tikz, border=5pt,11pt,leqno]{article}

\usepackage{amsmath, amsfonts,color,xcolor,
}

\usepackage{bbm}
\usepackage[latin1]{inputenc}
\usepackage[T1]{fontenc}

\usepackage{graphicx} 

\usepackage[english]{babel}

\usepackage{lmodern}

\usepackage{amsmath, amssymb, amsthm}
\usepackage{appendix}

\usepackage{mathrsfs}
\usepackage{latexsym}
\usepackage{graphicx}
\usepackage{cancel}
\usepackage[normalem]{ulem}

\usepackage{tikz}
\usetikzlibrary{patterns, arrows.meta, calc}

\let\d=\delta

\newcommand{\re}{{\mathbb R}}

\def\R{{\mathbb R}}
\def\N{{\mathbb  N}}
\def\N{{\mathbb N}}

\let\d=\delta

\newcommand{\cP}{{\mathcal P}}
\newcommand{\GammamuoptC}  {\Gamma^{\mu,{\rm opt}}_C}

\newtheorem{theorem}{Theorem}[section]
\newtheorem{lemma}{Lemma}[section]
\newtheorem{proposition}{Proposition}[section]
\newtheorem{definition}{Definition}[section]
\newtheorem{corollary}{Corollary}[section]
\newtheorem{remark}{Remark}[section]
\newtheorem{example}{Example}[section]

\newenvironment{Proofc}[1]{\smallskip\par\noindent\textsc{#1}\quad}%
  {\hfill$\Box$\bigskip\par}

\DeclareMathOperator{\sign}{sign}

\tikzset{
  >=Latex,
  axis/.style={thick},
  mainline/.style={thick},
  hatch/.style={pattern=north east lines,pattern color=cyan!60},
  dot/.style={circle,inner sep=1.6pt,fill}
}

\begin{document}

\title{Constrained Mean Field Games with Grushin type dynamics}
\author{{\large \sc Alessandra Cutr\`i\thanks{Dipartimento di Matematica, Universit\`a di Roma Tor Vergata, cutri@mat.uniroma2.it}, Paola Mannucci\thanks{Dipartimento di Matematica ``Tullio Levi-Civita'', Universit\`a di Padova, mannucci@math.unipd.it}, Claudio Marchi \thanks{Dipartimento di Matematica ``Tullio Levi-Civita'', Universit\`a di Padova, claudio.marchi@unipd.it}, Nicoletta Tchou \thanks{Univ Rennes, CNRS, IRMAR - UMR 6625, F-35000 Rennes, France, nicoletta.tchou@univ-rennes1.fr}}} 
\date{\today}

\maketitle

\begin{abstract}
This paper is devoted to a class of finite horizon deterministic mean field games with Grushin type dynamics, state constraints and nonlocal coupling.  First, we consider the optimal control problem that each agent aims to solve when the evolution of the population is given and we establish some properties as: the existence of an optimal trajectory for any starting point~$(x,t)$, the closed graph property for the multivalued map which associates to each point $(x,t)$ the set of optimal trajectories starting from that point, endowed with a suitable notion of convergence, the continuity of the value function. The main issue to overcome is due to the local interplay at boundary points between the set of state constraints and the degenerate dynamics. To this end, we shall point out two different sets of assumptions which are both sufficient for these properties.\\
Afterwards, we tackle the mean field games; taking advantage of the aforementioned properties, we prove the existence of a relaxed equilibrium (which describes the evolution of the game in terms of a probability on the set of admissible trajectories) and derive the existence of a mild solution (which is a couple formed by the value function for the generic player and a family of time dependent measures on the state).

\end{abstract}

\noindent{\bf Keywords.} Deterministic mean field games, state constraints, Lagrangian formulation, degenerate optimal control problems, Grushin type dynamics.

\noindent{\bf MSC codes.} 35Q89, 49J53, 49K20, 49N80, 91A16.

\section{Introduction}

In this paper we shall deal with a class of finite horizon deterministic mean field games with Grushin type dynamics, state constraints and nonlocal coupling.  First, we consider the optimal control problem that each agent aims to solve when the evolution of the population is given. In this finite horizon optimal control problem with state constraints, the player aims at minimize the cost in~\eqref{eq:46_intro} below choosing a trajectory which must fulfill the state constraints and obey to the Grushin type dynamics~\eqref{grugen_intro} with $\nu\in(0,\infty)$.
We focus on Grushin type dynamics because they are a propotype of dynamics where, in some points of the state space, the agent cannot move in some directions  (here, it cannot move in the $x_2$-direction when it is on the $x_2$-axis).
More precisely, we fix a time horizon $T>0$ and a closed subset $\Sigma\subset \mathbb R^2$ and we consider the control problem where the admissible trajectories $y(s)=(y_1(s),y_2(s))$, starting from point~$x\in \Sigma$ at time~$t\in[0,T]$, must fulfill: $y(t)=x$, $y(\cdot)\in \Sigma$ in~$[t,T]$, $y\in W^{1,1}(t,T)$ and 
\begin{equation}\label{grugen_intro}
\left\{\begin{array}{rcl}
\dot y_1(s)&=&\alpha_1(s)\\
\dot y_2(s)&=&|y_1(s)|^\nu\alpha_2(s)
\end{array}\right.
\end{equation}
for a.e.\, $s\in[t,T]$, for some measurable function $\alpha(s)=(\alpha_1(s),\alpha_2(s))$.\\
Clearly, these dynamics are related to the set of vector fields $X=\{X_1,X_2\}$ with $X_1=(1,0)$ and $X_2=(0,|x_1|^\nu)$. The study of such a system of vector fields goes back to the pioneering paper by Baouendi~\cite{Ba67} and Grushin~\cite{Gru70}. It is worth noting that the system~$X$ is not of H\"ormander type if $\nu\ne 2n$ for some $n\in\mathbb N\setminus\{0\}$. 
Nevertheless, as pointed out in~\cite{FL1}, the Carnot-Carath\'eodory space associated with~$X$ still enjoys several nice properties as: the balls in the Carnot-Carath\'eodory distance~$d_{CC}$ are open in the Euclidean topology (more precisely, for some positive constants~$C$ and~$a$, there holds $d_{CC}(x,y)\leq C |x-y|^a$), the doubling property holds true (namely, for some $C_1>0$, there holds $|B_{CC}(x,2r)|\leq C_1|B_{CC}(x,r)|$, where $B_{CC}(x,r)$ is the $d_{CC}$-ball centered at~$x$ with radius~$r$). It is worth noting that also the second order operator of this system,  $X_1^2+X_2^2$, has been intensively studied because it still enjoys many properties of more regular operators (see \cite{FL1,FGW} and references therein).

The case of standard Grushin dynamics (which fulfill H\"ormander condition) and respectively Euclidean dynamics, namely when $X_2=(0,x_1^\nu)$ for $\nu\in \N\setminus\{0\}$ and respectively when $X_2=(0,1)$, can be dealt with using the same arguments of this paper with easy adaptations.\\
The agent at point $x\in\Sigma$ at time~$t\in[0,T]$ will choose its trajectory~$(y,\alpha)$ among the admissible ones so to minimize the cost 
\begin{equation}\label{eq:46_intro}
J_t(x;(y,\alpha))=\int_t^T\left[\frac{|\alpha(\tau)|^2}{2}+\ell(y(\tau),\tau)\right] d\tau+g(y(T)).
\end{equation}
Clearly an optimal trajectory $(y,\alpha)$ must satisfy the properties: $(a)$ it remains in~$\Sigma$, $(b)$ it fulfills the Grushin type dynamics~\eqref{grugen_intro}, $(c)$ it minimizes the cost~\eqref{eq:46_intro} (in particular, $\alpha\in L^2(t,T)$).

We shall investigate some properties of this optimal control problem as: the existence of an optimal trajectory for any starting point~$(x,t)\in\Sigma\times [0,T]$, the closed graph property for the multivalued map which associates to each point $(x,t)$ the set of optimal trajectories starting from that point, endowed with a suitable notion of convergence, the continuity of the value function. 
The existence of an optimal trajectory for each starting point is a consequence of the structure of  the cost (in particular, the fact that its dependence on the control is superlinear and separated and the LSC of the cost with respect to the control) and of the closedness of~$\Sigma$. On the other hand, the closed graph property and the continuity of the value function depend on the interplay between the geometry of~$\Sigma$ and the structure of the dynamics. In particular, since our dynamics do not satisfy a strong controllability assumption, the continuity of the value function is not a priori guaranteed by the nowadays classical {\it inward pointing condition}, see~\cite{Soner}.
Our main result is to point out two different sets of assumptions on the local interplay between $\Sigma$ and the dynamics which are both sufficient for the closed graph property and for the continuity of the value function. One assumption requires  a suitable approximation property of any admissible trajectory starting from the point~$x$ and, in turns, it is ensured by the property that the point~$x$ can be reached by an admissible trajectory starting from any point $z\in\Sigma$ nearby with a cost that vanishes as $z$ converges to~$x$ (for similar results, under stronger assumptions, see~\cite{BFZ}). The other assumption requires that the point~$x$ cannot be reached by any admissible trajectory with finite cost. It is worth to point out that none of these two assumptions requires that the set~$\Sigma$ is the closure of a bounded open domain with $C^2$ boundary as in~\cite{CC}; namely, in our results, $\Sigma$ may have empty interior and a boundary not even~$C^1$.

As an application of these properties, we shall tackle a mean field game problem (MFG for short), with finite time horizon, where the agents must remain in~$\Sigma$, follow trajectories obeying to the dynamics~\eqref{grugen_intro} and aim to minimize the cost~\eqref{eq:46_intro} where the costs~$\ell$ and~$g$ depend in a nonlocal, regularizing, manner to the distribution of agents.

Let us recall that the theory of MFGs was introduced by Lasry and Lions \cite{LL1,LL2,LL3} and it has developed widely in the last two decades. 
It studies the  asymptotic behaviour of  deterministic or stochastic differential games with many, indistinguishable, players as the number  of players tends to infinity. The description of the literature on this topics goes far beyond the aim of the present paper; for an overview, we refer the reader to the notes by Cardaliaguet~\cite{C} on the lectures by Lions and by Cardaliaguet and Porretta~\cite{CP}. Most of the literature on deterministic MFGs deals with  strongly controllable dynamics without state constraints.
With regard to the state constrained deterministic MFGs, an important difficulty occurs: the density of the population may concentrate on the boundary of the state constraint even if the initial distribution is absolutely continuous. This phenomenon was first observed in some applications of MFGs to macroeconomics (see \cite{AHLLM}) and makes it difficult to  characterize the state distribution by means of a differential equation, as in the unconstrained case.
Such an issue was tackled in~\cite{CC} where, using some ideas of~\cite{BB, BC15, CM16},  the authors introduced a relaxed notion of equilibrium, which is defined in a  Lagrangian setting rather than with PDEs. Since the set of optimal trajectories starting from a given point may be not a singleton, the relaxed solutions of the MFG problem are probability measures on a suitable set of  admissible trajectories. Once the existence of a relaxed equilibrium was ensured, in~\cite{CC}, the authors associated with each relaxed equilibrium a {\it mild} solution which is a couple $(u,m)$ where $m=(m(t))_t$ is a suitable family of time-dependent probability measures on the state (obtained by a natural push-forward of the equilibrium) while the function~$u$ is the value function of the optimal control problem associated with ~$m$.

In the present paper, following this Lagrangian approach, our aim is to prove the existence of  relaxed MFG equilibria
which are described by  probability measures defined on a  set  of admissible trajectories.
The proof  involves Kakutani's fixed point theorem,  applied to a multivalued map defined on a suitable,  convex and compact, set of  probability measures on the set of admissible trajectories. In order to apply the Kakutani's theorem, a key role is played by the closed graph property established for the control problem.
Finally, to any relaxed MFG equilibrium, we can associate a {mild solution} as in~\cite{CC}.

This paper is organized as follows: in the rest of this section, we fix some notations and introduce the control problem and its assumptions. Section~\ref{sect:cgp} is devoted to establish the closed graph property for the multivalued map which associates to each point the set of optimal trajectories starting from that point. In particular, we shall point out two different sets of sufficient conditions for this property. One of these sets relies on the {\it approximation property} which, in turns, is ensured by the {\it reachability property}. The other set of conditions relies on the {\it unreachability property}. Section~\ref{sectionsufficient} contains quite general assumptions that guarantee the reachability property. Section~\ref{sect:vf} is devoted to introduce the value function associated with our control problem and to study its regularity.
In Section~\ref{sect:ex}, we provide several examples of control problems where the closed graph property and/or the reachability property hold or do not hold. Finally, in Section~\ref{sect:MFGequil}, taking advantage of the previous results, we tackle a class of MFG problems with Grushin type dynamics, state constraints and nonlocal couplings.

\subsection{Settings of the control problem}

Let $T>0$ be a finite time horizon and let $\Sigma$ be a closed subset of $\mathbb R^2$.
For $\nu\in(0,\infty)$ fixed, consider the control problem where the admissible trajectories $y(s)=(y_1(s),y_2(s))$, starting from the point $x\in\Sigma$ at time $t\in[0,T]$, must remain in~$\Sigma$ and obey to the Grushin type dynamics: $y(t)=x$, $y(\cdot)\in\Sigma$, $y\in W^{1,1}(t,T)$ and, for a.e.\, $s\in[t,T]$
\begin{equation}\label{grugen}
\left\{\begin{array}{rcl}
\dot y_1(s)&=&\alpha_1(s)\\
\dot y_2(s)&=&|y_1(s)|^\nu\alpha_2(s)
\end{array}
\right.
\end{equation}
for some measurable function $\alpha(s)=(\alpha_1(s),\alpha_2(s))$.
To any admissible trajectory $(y,\alpha)$ we associate the cost
\begin{equation}\label{eq:46}
J_t(x;(y,\alpha))=\int_t^T\left[\frac{|\alpha(\tau)|^2}{2}+\ell(y(\tau),\tau)\right] d\tau+g(y(T))
\end{equation}
where the functions~$\ell$ and $g$ are continuous. For simplicity, when $t=0$, we write $J(x;(y,\alpha))$ instead of $J_0(x;(y,\alpha))$.\\
We denote by $\Gamma_t[x]$ the set of admissible trajectories with $y(t)=x$ and finite cost, namely
\begin{equation*}
\Gamma_t[x]=\{\textrm{admissible trajectories $(y, \alpha)$ with $y(t)=x$ and $J_t(x;(y,\alpha))<\infty$}\}.
\end{equation*}
We denote  the set of optimal trajectories for~$(x,t)$ by
\begin{equation}\label{eq:gamma_opt}
\Gamma^{\rm{opt}}_t[x]=\{(y, \alpha)\in\Gamma_t[x]\;:\; J_t(x;(y,\alpha))=\min_{(\hat y,\hat \alpha)\in \Gamma_t[x]} J_t(x;(\hat y,\hat \alpha))\}.
\end{equation}
For simplicity, when $t=0$ we drop the subscript: $\Gamma[x]=\Gamma_0[x]$ and $\Gamma^{\rm{opt}}[x]=\Gamma_0^{\rm{opt}}[x]$.
For $[t_1,t_2]\subseteq[0,T]$, we shall denote $\Gamma_{t_1,t_2}[x]$ the set of admissible trajectories $(y,\alpha)$ restricted on the time interval $[t_1,t_2]$ with $y\in W^{1,2}(t_1,t_2)$ and $\alpha\in L^2(t_1,t_2)$.\\
Clearly, for every $(y, \alpha)\in \Gamma_t[x]$ (and, in particular, for every $(y, \alpha)\in \Gamma^{\rm{opt}}_t[x]$), the control~$\alpha$ belongs to~$L^2(t,T)$ and the curve $y$ belongs to $W^{1,2}(t,T)$.\begin{remark}\label{prp:ex_OT}
An easy adaptation of the arguments in~\cite[Proposition 2.5]{24SIAM} gives that, for each $(x,t)\in\Sigma\times[0,T]$, the set  $\Gamma^{\rm{opt}}_t[x]$ is not empty.
\end{remark}

\section{Closed graph property}\label{sect:cgp}
In this section, we study a closed graph property for $\Gamma^{\rm{opt}}[x]$ which will play a crucial role in the rest of the paper. We provide two different sets of assumptions which are both sufficient for such a property. One of these set relies on the approximation property (see Definition~\ref{def:appr_prop}) which states that, given any point~$x\in\Sigma$ and any sequence $\{x_n\}_n$, with $x_n \in \Sigma$ and $ x_n\to x$ as $n\to\infty$, any admissible trajectory starting from~$x$ can be suitably approximated by a sequence of admissible trajectories starting from~$x_n$. In turns, the approximation property is ensured by the reachability property (see Definition~\ref{def:reach_prop}): the point~$x$ can be reached by an admissible trajectory starting from any point~$z\in\Sigma$ nearby with a cost vanishing as $z$ converges to~$x$. The other set of assumptions requires that the point~$x$ cannot be reached by any admissible trajectory with finite cost (see Definition~\ref{def:unreach_pt}).

We now recall the closed graph property and we introduce the approximation property and the notion of unreachable point.
\begin{definition}
For any $x\in \Sigma$, we say that $\Gamma^{\rm{opt}}[x]$ has the closed graph property when:\\
for any sequence $\{x_n\}_{n\in\N}$ with $x_n\in\Sigma$ and $x_n\to x$ as $n\to\infty$, and for any $(y_n,\alpha_n)\in\Gamma^{\rm{opt}}[x_n]$ with $y_n$ uniformly convergent to a curve~$y$, there exists a measurable function~$\alpha$ such that~$(y,\alpha)\in \Gamma^{\rm{opt}}[x]$.
\end{definition}

\begin{definition}\label{def:appr_prop}
For any $x\in \Sigma$,  we say that the control problem~\eqref{grugen}-\eqref{eq:46} has the approximation property in~$x$ when:\\
for any $(y,\alpha)\in\Gamma[x]$ and any sequence $\{x_n\}_{n\in\N}$, with $x_n\in\Sigma$ and $x_n\to x$ as $n\to\infty$, there exists a sequence $\{(y_n,\alpha_n)\}_{n\in\N}$ such that, for any $n\in\N$, $(y_n,\alpha_n)\in\Gamma[x_n]$ and
\begin{equation}\label{eq:lemma1}
\begin{array}{rl}
(i)&\qquad \lim_{n\to\infty}\|y_n(\cdot)-y(\cdot)\|_{L^\infty(0,T)}=0\\
(ii)&\qquad\lim_{n\to\infty}\left[\|\alpha_n\|_2^2- \|\alpha\|_2^2\right]=0\\
(iii)&\qquad \lim_{n\to\infty}J(x_n;(y_n,\alpha_n)) = J(x;(y,\alpha)).
\end{array}
\end{equation}
\end{definition}
\begin{remark}\label{rmk:2.1}
Observe that for the cost defined in \eqref{eq:46}, condition {\it (iii)} of \eqref{eq:lemma1} is a consequence of {\it (i)} and {\it (ii)} and the continuity of $\ell$ and $g$. Nevertheless, we prefer to write all three properties to facilitate a possible extension of the theory to more general costs. 
\end{remark}

\begin{definition}\label{def:unreach_pt}
The point $x\in\Sigma$ is said unreachable for the control problem~\eqref{grugen}-\eqref{eq:46} if, for any $\bar x\in \Sigma\setminus\{x\}$, there is no trajectory $(y,\alpha)\in \Gamma[\bar x]$ with  $y(T)=x$. 
\end{definition}
\begin{remark}\label{rmk:unreach_ell}
The definition of unreachable point is independent from the choice of the running cost~$\ell$. This issue will play a crucial role in the study of Mean Field Games.
\end{remark}
\begin{proposition}\label{prp:prop1}
Let $x\in\Sigma$. Assume one of the following properties
\begin{itemize}
\item[(i)] the control problem~\eqref{grugen}-\eqref{eq:46} has the approximation property in~$x$
\item[(ii)] the point $x$ is unreachable for the control problem~\eqref{grugen}-\eqref{eq:46}.  
\end{itemize}
Then, $\Gamma^{\rm{opt}}[x]$ has the closed graph property.
\end{proposition}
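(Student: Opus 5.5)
The plan is to first extract a limit control by a compactness argument common to both cases, and then prove optimality of the limit separately under (i) and under (ii).

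\emph{Common compactness step.} Let $x_n\to x$ and $(y_n,\alpha_n)\in\Gamma^{\rm opt}[x_n]$ with $y_n\to y$ uniformly. Since $x_n\in\Sigma$, the constant curve $y\equiv x_n$ with $\alpha\equiv0$ is admissible. Optimality then gives
\[
J(x_n;(y_n,\alpha_n))\le \int_0^T\ell(x_n,\tau)\,d\tau+g(x_n),
\]
and the right-hand side is bounded uniformly in~$n$ by continuity of $\ell,g$. The curves $y_n$ are uniformly bounded because they converge uniformly, so $\ell(y_n,\cdot)$ and $g(y_n(T))$ are bounded as well. Hence $\|\alpha_n\|_2$ is bounded, and up to a subsequence $\alpha_n\rightharpoonup\alpha$ weakly in $L^2(0,T)$.

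Next we pass to the limit in the integral form of \eqref{grugen}. The first component gives $y_{1,n}(s)=x_{n,1}+\int_0^s\alpha_{1,n}\to x_1+\int_0^s\alpha_1$. For the second component, $|y_{1,n}|^\nu\to|y_1|^\nu$ uniformly while $\alpha_{2,n}\rightharpoonup\alpha_2$, so $|y_{1,n}|^\nu\alpha_{2,n}\rightharpoonup|y_1|^\nu\alpha_2$ weakly in $L^2$. Therefore $(y,\alpha)$ solves \eqref{grugen} with $y(0)=x$. Since $\Sigma$ is closed, $y(\cdot)\in\Sigma$. By weak lower semicontinuity of the $L^2$ norm and continuity of $\ell,g$,
\[
J(x;(y,\alpha))\le\liminf_n J(x_n;(y_n,\alpha_n))<\infty,
\]
so $(y,\alpha)\in\Gamma[x]$.

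\emph{Case (i).} Take any $(\hat y,\hat\alpha)\in\Gamma[x]$. The approximation property yields $(\hat y_n,\hat\alpha_n)\in\Gamma[x_n]$ with $J(x_n;(\hat y_n,\hat\alpha_n))\to J(x;(\hat y,\hat\alpha))$. By optimality of $(y_n,\alpha_n)$,
\[
J(x;(y,\alpha))\le\liminf_n J(x_n;(y_n,\alpha_n))\le\lim_n J(x_n;(\hat y_n,\hat\alpha_n))=J(x;(\hat y,\hat\alpha)).
\]
Hence $(y,\alpha)\in\Gamma^{\rm opt}[x]$.

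\emph{Case (ii).} The key step, and the only non-routine one, is to show that unreachability forces every element of $\Gamma[x]$ to be the constant curve $x$. Suppose $(\hat y,\hat\alpha)\in\Gamma[x]$ with $\hat y(\tau)=\bar x\neq x$ for some $\tau\in(0,T]$.
\begin{itemize}
\item Reversing time, $z(s)=\hat y(\tau-s)$ with control $-\hat\alpha(\tau-s)$ still satisfies \eqref{grugen}, since the dynamics are symmetric under $\alpha\mapsto-\alpha$. It stays in $\Sigma$ and goes from $\bar x$ to $x$ in time $\tau$ with finite cost.
\item Using autonomy of \eqref{grugen}, first wait at $\bar x$ on $[0,T-\tau]$ (constant curve, $\alpha=0$, admissible because $\bar x\in\Sigma$), then follow $z$.
\end{itemize}
This produces an element of $\Gamma[\bar x]$ ending at $x$ at time $T$, contradicting Definition~\ref{def:unreach_pt}. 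Applied to the limit from the compactness step, this gives $y\equiv x$. The control can then be replaced by $\alpha\equiv0$, which is compatible with \eqref{grugen} because $\dot y=0$. Since every element of $\Gamma[x]$ is constant, every element of $\Gamma[x]$ has cost at least $\int_0^T\ell(x,\tau)\,d\tau+g(x)$, which is the cost of $(x,0)$. Thus $(y,0)\in\Gamma^{\rm opt}[x]$, and the closed graph property holds.
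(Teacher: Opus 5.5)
Your proposal is correct and follows essentially the same route as the paper: weak $L^2$ compactness of the optimal controls, then in case (i) the approximation sequence combined with lower semicontinuity of $J$, and in case (ii) a time-reversal argument contradicting unreachability. Your case (ii) is slightly more careful than the paper's in two respects. First, your wait-then-reverse construction also rules out trajectories that leave $x$ and come back, whereas the paper only treats trajectories with $\bar y(T)\neq x$. Second, replacing the limit control by $\alpha\equiv0$ avoids the issue that, at points with $x_1=0$, the constant curve can carry a nonzero $\alpha_2$, so the weak limit need not itself be $(x,0)$.
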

\begin{proof}
Consider a sequence $\{x_n\}_{n\in\N}$ with $x_n\in\Sigma$ and $x_n\to x$ as $n\to\infty$, and a sequence $(y_n,\alpha_n)\in\Gamma^{\rm{opt}}[x_n]$ with $y_n$ uniformly convergent to a curve~$y$.
Since the trajectories $(y_n,\alpha_n)$ are optimal, the controls $\alpha_n$ are uniformly bounded in $L^2(0,T)$. Hence, possibly passing to a subsequence that we still denote by $(y_n,\alpha_n)$, we may assume that $\alpha_n$ converges weakly to some $\alpha$ in $L^2(0,T)$. Clearly, $(y,\alpha)$ is an admissible trajectory, namely it fulfills~\eqref{grugen} and stays in $\Sigma$.\\
We now claim that $(y,\alpha)$ is optimal, namely
\begin{equation}\label{eq:claim1}
J(x,(\hat y, \hat \alpha))\geq J(x,(y,\alpha)) \qquad \forall (\hat y, \hat \alpha)\in \Gamma[x].
\end{equation}
In order to prove this relation, we split our arguments according to the assumptions. \\
{\it Case $1$: assumption~$(i)$ holds true.} This part of the proof is an adaptation of the proofs of \cite[Proposition 2.19 and Proposition 3.10]{24SIAM} so we just sketch it and we refer the reader to that paper for the details. The approximation property ensures that there exists a sequence $\{(\hat y_n,\hat \alpha_n)\}_{n\in\N}$ where $(\hat y_n,\hat \alpha_n)\in \Gamma[x_n]$ satisfy properties in~\eqref{eq:lemma1} with $(y,\alpha)$ replaced by $(\hat y, \hat \alpha)$. Since $(y_n,\alpha_n)\in \Gamma^{\rm{opt}}[x_n]$, there holds
\begin{equation*}
J(x_n,(\hat y_n, \hat \alpha_n))\geq J(x_n,(y_n,\alpha_n)).    
\end{equation*}
We now study separately the two sides of the previous relation. By property~\eqref{eq:lemma1}-$(iii)$, we have
\begin{equation*}
\limsup_{n}J(x_n,(\hat y_n, \hat \alpha_n))=  J(x,(\hat y, \hat \alpha));
\end{equation*}
on the other hand, by the properties of $J$, we have
\begin{equation*}
\liminf_{n}J(x_n,(y_n,\alpha_n))\geq  J(x,(y,\alpha)).
\end{equation*}
Replacing the last two relations in the previous one, we accomplish the proof of our claim~\eqref{eq:claim1}.\\
\noindent{\it Case $2$: assumption~$(ii)$ holds true.} We claim $\Gamma^{\rm{opt}}[x]=\{(x,0)\}$. Indeed, assume by contradiction that there exists $(\bar y,\bar \alpha)\in \Gamma[x]$ with $\bar y(T)=\bar x\in\Sigma\setminus\{x\}$. Then, reversing the time and the sign of~$\bar \alpha$, we obtain an admissible trajectory from 
$\bar x$ to $x$ with finite cost, in contradiction with our assumptions. Hence, our claim is proved.
Moreover, arguing as before, $(y_n,\alpha_n)$ tends (up to subsequences) to a $(y,\alpha)$ with $y(0)=x$ and  $\|\alpha\|_{L^2}$ bounded. Using the LSC of the cost $J$ with respect to the control $\alpha$, we get  that $(y,\alpha)$ has finite cost, thus it necessarily coincides with $(x,0)$ and therefore it is optimal. 
\end{proof}
Now we give the definition of reachability property and we prove in the next Lemma that it implies the approximation property.
\begin{definition}\label{def:reach_prop}
For any $x\in \Sigma$, we say that the dynamics~\eqref{grugen} have the reachability property in~$x$ when: for any sequence $\{x_n\}_{n\in\N}$, with $x_n\in\Sigma$ and $x_n\to x$ as $n\to\infty$, there exist two sequences $\{(y_n,\alpha_n)\}_{n\in\N}$ and $\{\delta_n\}_{n\in\N}$ such that $(y_n,\alpha_n)\in\Gamma_{0,\delta_n}[x_n]$ and $\delta_n\in[0,T]$ with
\begin{equation*}
y_n(\delta_n)=x,\qquad \lim_{n\to\infty}\|\alpha_n\|_{L^2(0,\delta_n)} =0 \quad \textrm{and } \lim_{n\to\infty}\delta_n=0.
\end{equation*}
\end{definition}

\begin{lemma}\label{lema:reach_appr}
If the dynamics~\eqref{grugen} have the reachability property in~$x\in\Sigma$, then the control problem~\eqref{grugen}-\eqref{eq:46} has the approximation property in~$x$.
\end{lemma}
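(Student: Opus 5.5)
The plan is a concatenation construction. Fix $(y,\alpha)\in\Gamma[x]$ and a sequence $x_n\in\Sigma$ with $x_n\to x$. By the reachability property in $x$ there exist $\delta_n\in[0,T]$, $\delta_n\to0$, and $(z_n,\beta_n)\in\Gamma_{0,\delta_n}[x_n]$ with $z_n(\delta_n)=x$ and $\|\beta_n\|_{L^2(0,\delta_n)}\to0$. I would first steer from $x_n$ to $x$ along $z_n$ during $[0,\delta_n]$. Then I would follow the original trajectory rescaled to the shorter interval $[\delta_n,T]$. Concretely, set $\lambda_n:=T/(T-\delta_n)\to1$ (so $\delta_n<T$ for $n$ large) and
\begin{equation*}
y_n(s)=\begin{cases} z_n(s), & s\in[0,\delta_n],\\ y(\lambda_n(s-\delta_n)), & s\in[\delta_n,T],\end{cases}\qquad
\alpha_n(s)=\begin{cases} \beta_n(s), & s\in[0,\delta_n],\\ \lambda_n\alpha(\lambda_n(s-\delta_n)), & s\in[\delta_n,T].\end{cases}
\end{equation*}
The Grushin dynamics \eqref{grugen} are invariant under time reparametrisation $s\mapsto \lambda_n(s-\delta_n)$ provided the control is multiplied by $\lambda_n$. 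This works because both components of the vector field are linear in $\alpha$: $\dot y_2=|y_1|^\nu\alpha_2$ becomes $\lambda_n|y_1|^\nu\alpha_2$. Since $z_n(\delta_n)=x=y(0)$, the curve $y_n$ is continuous. It is in $W^{1,2}$, stays in $\Sigma$, and satisfies \eqref{grugen} a.e. Because $\ell,g$ are continuous and the curve is bounded, its cost is finite, so $(y_n,\alpha_n)\in\Gamma[x_n]$.

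Next I would verify \eqref{eq:lemma1}. For (ii), a change of variables gives
\begin{equation*}
\|\alpha_n\|_{L^2(0,T)}^2=\|\beta_n\|_{L^2(0,\delta_n)}^2+\lambda_n\|\alpha\|_{L^2(0,T)}^2\to\|\alpha\|_2^2 .
\end{equation*}
For (i), the two pieces are handled separately.
\begin{itemize}
\item On $[\delta_n,T]$: $|y(\lambda_n(s-\delta_n))-y(s)|$ is bounded by the modulus of continuity of $y$ evaluated at $|\lambda_n(s-\delta_n)-s|\le C\delta_n$. Since $y$ is uniformly continuous on $[0,T]$, this tends to zero uniformly.
\item On $[0,\delta_n]$: $|y_n(s)-y(s)|\le|z_n(s)-x|+|x-y(s)|$. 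The second term is small by continuity of $y$ at $0$.
\end{itemize}

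The main obstacle is the first term, a uniform bound on $|z_n(s)-x|$ for $s\le\delta_n$. The first component is easy by Cauchy--Schwarz: $|z_{n,1}(s)-x_{n,1}|\le\sqrt{\delta_n}\,\|\beta_n\|_2\to0$. For the second component, the same Cauchy--Schwarz bound gives
\begin{equation*}
|z_{n,2}(s)-x_{n,2}|\le\sup_{[0,\delta_n]}|z_{n,1}|^\nu\sqrt{\delta_n}\,\|\beta_n\|_2 .
\end{equation*}
Here $\sup|z_{n,1}|$ is bounded by $|x_{n,1}|+\sqrt{\delta_n}\|\beta_n\|_2$, which is bounded. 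Hence $z_n\to x$ uniformly on $[0,\delta_n]$, using $x_n\to x$.

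Finally, (iii) follows from (i), (ii) and the continuity of $\ell$ and $g$, as observed in Remark~\ref{rmk:2.1}. In the running-cost term, the contribution of $[0,\delta_n]$ vanishes since $\ell$ is bounded on a compact set. On the remaining interval, $\ell(y_n(\tau),\tau)\to\ell(y(\tau),\tau)$ uniformly.
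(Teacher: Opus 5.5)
Your proof is correct and follows the paper's construction: you concatenate the reaching trajectory on $[0,\delta_n]$ with the original trajectory rescaled by $T/(T-\delta_n)$, and you obtain (ii) from the same change-of-variables identity. Your treatment of (i) is slightly more complete than the paper's, for two reasons. First, you explicitly bound $|z_n(s)-y(s)|$ on $[0,\delta_n]$, which the paper leaves implicit. Second, on $[\delta_n,T]$ you use uniform continuity of $y$ rather than an estimate involving $\textrm{diam}(\Sigma)$, which may be infinite.
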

\begin{proof}
We build the approximating trajectory $(y_n,\alpha_n)$ starting from $x_n$ in two steps. In the former one, we use the trajectory of the reachability property while, in the latter one, we follow the trajectory $(y,\alpha)$ suitably rescaled in time.\\
For any $x_n$, denote $(\tilde y_n,\tilde\alpha_n)$ the trajectory as in Definition~\ref{def:reach_prop}. We define
$\alpha_n:[0,T]\to \R^2$ as
\[
\alpha_n(s)=\left\{
\begin{array}{ll}
\tilde\alpha_n(s)&\quad\textrm{for }s\in[0,\delta_n],\\
\frac{T}{T-\delta_n}\alpha \left((s-\delta_n)\frac{T}{T-\delta_n}\right) &\quad \textrm{for }s\in[\delta_n,T]
\end{array}
\right.
\]
and $y_n$ the corresponding curve:
\[
y_n(s)=\left\{
\begin{array}{ll}
\tilde y_n(s)&\quad\textrm{for }s\in[0,\delta_n],\\
y\left((s-\delta_n)\frac{T}{T-\delta_n}\right)&\quad \textrm{for }s\in[\delta_n,T].
\end{array}
\right.
\]
Then the trajectory $(y_n,\alpha_n)$ is admissible (with $y_n(T)=y(T)$). \\
Let us now estimate~$|y(s)-y_n(s)|$ for $s\in[\delta_n,T]$, 
\begin{equation*}
\begin{array}{ll}
|y(s)-y_n(s)|&= |y(s)- y\left((s-\delta_n)\frac{T}{T-\delta_n}\right)|
\leq (1+ \textrm{diam}(\Sigma)^\nu)\int_{(s-\delta_n)\frac{T}{T-\delta_n}}^s|\alpha(\tau)|\, d\tau\\
&\leq (1+\textrm{diam}(\Sigma)^\nu)\|\alpha\|_2\sqrt{\delta_n}\sqrt{\frac{T-s}{T-\delta_n}}\leq (1+\textrm{diam}(\Sigma)^\nu)\|\alpha\|_2 \sqrt{\delta_n}.
\end{array}
\end{equation*}
Hence point $(i)$ in Definition~\ref{def:appr_prop} is completely proved.\\
In order to prove point~$(ii)$ in Definition~\ref{def:appr_prop}, we write
\begin{eqnarray*}
\|\alpha_n\|^2_{L^2(0,T)}&-& \|\alpha\|^2_{L^2(0,T)}\\&=& \|\alpha_n\|^2_{L^2(0,\delta_n)}+ \frac{T^2}{(T-\delta_n)^2}\int_{\delta_n}^T\left|\alpha((s-\delta_n)\frac{T}{T-\delta_n})\right|^2ds- \|\alpha\|^2_{L^2(0,T)}\\
&=&  \|\alpha_n\|^2_{L^2(0,\delta_n)}+ \left(\frac{T}{T-\delta_n}-1\right) \|\alpha\|^2_{L^2(0,T)} =o(1).
\end{eqnarray*}
In conclusion, the boundedness and continuity of the cost $\ell$ imply point~$(iii)$ in Definition~\ref{def:appr_prop}.
\end{proof}
{From Proposition~\ref{prp:prop1} and Lemma~\ref{lema:reach_appr}, we deduce the following result.}
\begin{corollary}\label{corollary}
If the dynamics~\eqref{grugen} have the reachability property in~$x\in\Sigma$, then $\Gamma^{\rm{opt}}[x]$ has the closed graph property.
\end{corollary}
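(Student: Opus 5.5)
This is a direct chaining of the two preceding results, so the plan is essentially two lines, with one point to check. Fix $x\in\Sigma$ at which the dynamics~\eqref{grugen} have the reachability property of Definition~\ref{def:reach_prop}.

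First, I invoke Lemma~\ref{lema:reach_appr}. It yields that the control problem~\eqref{grugen}-\eqref{eq:46} has the approximation property in~$x$. Concretely, for every $(\hat y,\hat\alpha)\in\Gamma[x]$ and every sequence $x_n\to x$ in~$\Sigma$, there are $(\hat y_n,\hat\alpha_n)\in\Gamma[x_n]$ satisfying~\eqref{eq:lemma1}.

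Second, hypothesis $(i)$ of Proposition~\ref{prp:prop1} is then satisfied at~$x$. Applying that proposition gives that $\Gamma^{\rm{opt}}[x]$ has the closed graph property, which is the claim.

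The only thing I would verify is that the notions line up. The approximation property produced by Lemma~\ref{lema:reach_appr} must be exactly the one used in Case~1 of the proof of Proposition~\ref{prp:prop1}: pointwise at $x$, for arbitrary sequences $x_n\to x$, with the same cost functional~\eqref{eq:46}. It is, since both refer to Definition~\ref{def:appr_prop} verbatim.

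I would also note that the lemma's construction uses only the finite-cost admissibility of $(\hat y,\hat\alpha)$ and the continuity and boundedness of $\ell$ and $g$ on~$\Sigma$. These are standing assumptions, so no additional hypothesis is introduced. There is no real obstacle here.
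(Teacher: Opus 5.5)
Your proposal is correct and is exactly the paper's argument. The paper also obtains the corollary by composing Lemma~\ref{lema:reach_appr} (reachability at $x$ gives the approximation property at $x$) with case~$(i)$ of Proposition~\ref{prp:prop1}, and your check that both results use the same pointwise notion from Definition~\ref{def:appr_prop} is the only thing that needs verifying.
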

\begin{remark}\label{rmk:k=0}
For $\nu=0$ (i.e. for Euclidean dynamics  $y'=\alpha$) the reachability property is weaker than the assumptions of~\cite{CC} which require that $\Sigma$ is the closure of a bounded open set with $C^2$ boundary. For instance, the set $\Sigma=\{(x_1,x_2)\;:\: x_1\in[0,1], \; x_1^2\leq x_2\leq 2 x_1^2\}$ 
has the reachability property but does not fulfill the assumptions of~\cite{CC}.
The reachability property may hold even in cases where $\Sigma$ has empty interior, see Remark~\ref{RK4.4} and Example~\ref{example6.2} below.
\end{remark}

\section{A sufficient condition for the reachability property}\label{sectionsufficient}
This section is devoted to establish the reachability property for each point of~$\Sigma$. If $x$ is a point in the interior of~$\Sigma$, the reachability property in~$x$ always holds true. If $x\in\partial\Sigma$, we need further geometric assumptions near~$x$ to get this property.\\ 
We denote by $B_R(x)$ the Euclidean ball centered at ~$x\in\R^2$ with radius~$R>0$.
\begin{definition}\label{def:x1_convex}
We say that a set $A\subset\R^2$ is $x_1$-convex if there holds:
for any $(x_1,x_2), (x_1+h,x_2)\in A$ with $h\in(0,\infty)$, any point 
$(x_1+th,x_2)$, for $t\in[0,1]$, belongs to $A$.
\end{definition}
\begin{remark}
If $(x_1,x_2), (x_1+h,x_2)\in \Sigma$ with $h\in(0,\infty)$ and $\Sigma$ is $x_1$-convex, the curve $(x_1+th,x_2)$, for $t\in[0,1]$, is admissible for Grushin dynamics~\eqref{grugen}.
\end{remark}
\noindent Now we state our assumptions on the set~$\Sigma$ which depend on the fact that a point $x_0=(x_{01},x_{02})\in\partial\Sigma$ fulfills or not $x_{01}=0$ (note that in $x_{01}=0$ the dynamics are degenerate).
 \paragraph{Assumptions} The set~$\Sigma$ satisfies:
\begin{itemize}
\item[(H1)]\label{segm}
If $(x_{01},x_{02})\in\partial\Sigma$, with $x_{01}\ne 0$, there exists $R>0$ such that $\Sigma\cap B_R(x_0)$ is $x_1$-convex and there exists a segment, not parallel to the~$x_1$-axis, starting from  $x_0$, contained in $\Sigma$; more precisely:
if $\Sigma\cap B_R(x_0)$ intersects the plane $\{y>x_{02}\}$, then there exists $C>0$ such that at least one of the segments
 \[x_1=x_{10},\qquad\forall x_2\in\, (x_{02},x_{02}+R]\]
 or
\[ x_2-x_{02}=C(x_1-x_{01}), \quad \forall x_1\in (x_{01},x_{01}+R]\]
or  
\[ x_2-x_{02}=C(-x_1+x_{01}),\quad \forall x_1\in [x_{01}-R,x_{01})\]
is contained in $\Sigma$. We assume similar hypotheses if $\Sigma\cap B_R(x_0)$ intersects the plane $\{y<x_{02}\}$.
\end{itemize}
\begin{itemize}
\item[(H2)]\label{curvacar}
If $(0,x_{02})\in\partial\Sigma$, there exists $R>0$ such that: $\Sigma\cap B_R(0,x_{02})$ is $x_1$-convex and there exists a suitable curve, starting from  $x_0$, contained in $\Sigma$; more precisely: if $\Sigma\cap B_R(0,x_{02})$ intersects the plane $\{y>x_{02}\}$ there exists $C>0$ such that at least one of the curves 
\[ x_2-x_{02}=Cx_1^{\nu+1}, \quad\forall x_1\in (0,R]\]
or  
\[ x_2-x_{02}=C(-x_1)^{\nu+1},\quad \forall x_1\in [-R,0)\]
is contained in $\Sigma$. An analogous hypothesis is assumed if $\Sigma\cap B_R(0,x_{02})$ intersects the plane $\{y<x_{02}\}$.
\end{itemize}

\begin{lemma}\label{lemma1gen}
Under assumptions~$(H1)$-$(H2)$ 
for any $x\in\Sigma$, the control problem~\eqref{grugen}-\eqref{eq:46} has the reachability property. In particular, by Corollary~\ref{corollary}, the set $\Gamma^{\rm{opt}}[x]$ has the closed graph property.
\end{lemma}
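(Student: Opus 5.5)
Interior points are easy. If $x$ lies in the interior of $\Sigma$, then for $x_n$ close enough the segment $[x_n,x]$ lies in $\Sigma$. I would go along it in time $\delta_n=|x_n-x|^{1/2}$. If $x_1\neq0$ this is done with controls $\alpha_1=(x_1-x_{n,1})/\delta_n$ and $\alpha_2=\dot y_2/|y_1|^\nu$, which is bounded. If $x_1=0$, I would instead first move horizontally to $x_1=\rho_n:=|x_n-x|^{1/(2\nu+2)}$ (still inside the ball), then vertically, then horizontally back. The $L^2$ norm of a piece of length $L$ travelled in time $\tau$ with speed factor $c$ is $L/(c\sqrt\tau)$. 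The vertical piece has length $\le|x_n-x|$ and factor $\rho_n^\nu$, so its norm vanishes once $\tau$ is chosen as a suitable power of $|x_n-x|$, namely $|x_n-x|^{1/2}$. So $\|\alpha_n\|_{L^2}\to0$ and $\delta_n\to0$.

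For $x\in\partial\Sigma$ I would use (H1)-(H2) and build a path from $x_n$ to $x$ in two moves. First, the $x_1$-convexity of $\Sigma\cap B_R(x)$ lets us move horizontally from $x_n=(x_{n1},x_{n2})$ to the point of the special curve at height $x_{n2}$. Second, we slide along that curve down to $x$.

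In case (H1), with $x_{01}\neq0$, the curve is a vertical segment or a segment of slope $\pm C$. Suppose $x_{n2}>x_{02}$ (the case $x_{n2}<x_{02}$ is symmetric, and if $x_{n2}=x_{02}$ we only move horizontally). The segment point $p_n$ at height $x_{n2}$ satisfies $|p_n-x|\le c|x_{n2}-x_{02}|$. Since $x_n$ and $p_n$ lie in $\Sigma\cap B_R(x)$ at the same height, the horizontal segment joining them is admissible by $x_1$-convexity. Near $x$ we have $|y_1|\ge|x_{01}|/2$, so the dynamics are uniformly nondegenerate and the Euclidean estimate applies. Travelling a length $\varepsilon_n:=|x_n-x|(1+c)$ in time $\sqrt{\varepsilon_n}$ costs $\|\alpha\|_{L^2}\lesssim\sqrt{\varepsilon_n}\to0$.

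In case (H2), with $x=(0,x_{02})$, the curve is $x_2-x_{02}=C|x_1|^{\nu+1}$ on one side. Again $x_n$ is joined horizontally to $p_n=(s_n,x_{n2})$ with $|s_n|=((x_{n2}-x_{02})/C)^{1/(\nu+1)}\to0$; the horizontal leg has length $|x_{n1}-s_n|\to0$ and costs $L/\sqrt\tau$. Along the curve, parametrise $y_1(s)$ monotonically from $s_n$ to $0$, with $\dot y_2=C(\nu+1)|y_1|^\nu\,\mathrm{sign}(y_1)\dot y_1$. This gives $\alpha_2=\pm C(\nu+1)\alpha_1$, which is exactly why the exponent $\nu+1$ is chosen. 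The cost of this leg is therefore $(1+C^2(\nu+1)^2)^{1/2}|s_n|/\sqrt{\tau}$, and taking $\tau=|s_n|$ makes both it and $\tau$ tend to $0$. Concatenating the two legs gives $\delta_n\to0$, $\|\alpha_n\|_{L^2(0,\delta_n)}\to0$ and $y_n(\delta_n)=x$, which is the reachability property. Corollary~\ref{corollary} then yields the closed graph property.

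The main obstacle is the bookkeeping of configurations. The hypotheses offer alternative curves according to which half-planes $\Sigma\cap B_R(x)$ meets, so one has to check that whenever $x_{n2}>x_{02}$ (respectively $<$) the relevant curve really exists and reaches height $x_{n2}$; this holds for $n$ large, since then $x_n\in B_R(x)$. One must also check that the horizontal segment from $x_n$ to $p_n$ stays inside $B_R(x)$, so that $x_1$-convexity applies; this follows because both endpoints converge to $x$.
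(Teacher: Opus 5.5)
Your proof is correct and, at boundary points, follows the paper's construction. You make a horizontal move justified by the $x_1$-convexity of $\Sigma\cap B_R(x)$, then slide along the segment of (H1) or along the curve $x_2-x_{02}=C|x_1|^{\nu+1}$ of (H2), where $\alpha_2=\pm C(\nu+1)\alpha_1$ keeps the control bounded exactly as in the paper.

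The remaining differences are inessential. In case (H1) you get a small $\|\alpha_n\|_{L^2(0,\delta_n)}$ by choosing leg durations such as $\tau=\sqrt{\varepsilon_n}$, whereas the paper uses unit-speed, uniformly bounded controls with $\|\alpha_n\|_{L^2}\le C\sqrt{\delta_n}$. At interior points you use a straight segment, or on $\{x_1=0\}$ a horizontal--vertical--horizontal detour at $x_1=\rho_n$ with correctly balanced exponents. The paper instead uses the curves $x_2-x_{02}=C(x_1^{\nu+1}-x_{01}^{\nu+1})$; both constructions work.
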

\begin{proof}
Fix $x_0\in\Sigma$. The strategy of the proof is to  explicitly build the approximating trajectory $(y_n,\alpha_n)\in\Gamma_{0,\delta_n}[x_n]$, $y_n(\delta_n)=x_0$. As a first step (if needed), the approximating trajectory moves parallel to the $x_1$-axis remaining in $\Sigma$, until it reaches a curve as in our assumption.
Afterwards, it follows such a curve until it arrives at the point~$x_0$.
For later use, we recall the definition of the function ``$\sign(\cdot)$'': 
\[
\sign(z)=1 \quad\textrm{if } z>0,\qquad  \sign(z)=-1 \quad\textrm{if } z<0,\qquad \sign(0)=0.
\]
We assume that $n$ is sufficiently large to have $x_n\in\Sigma\cap B_R (x_{0})$.

Now we construct the trajectory~$y_n$ connecting $x_n$ to $x_0$ in the interval $[0,\delta_n]$. This construction depends on the position of~$x_0$ according to these three cases:
\begin{enumerate}
\item \label{non caract frontiera} $x_0=(x_{01},x_{02})\in\partial\Sigma$ with $x_{01}\not=0$
\item \label{caract frontiera} $x_0=(0,x_{02})\in\partial\Sigma$
\item \label{interno}$x_0=(x_{01},x_{02})\in\Sigma\backslash \partial\Sigma$.
\end{enumerate}

\noindent {Case \ref{non caract frontiera}}. First, the trajectory follows the $x_1$-direction from the point $x_n$ to a point on the segment contained in $\Sigma$. After, it follows such a  segment up to reach $x_0$. To this end, we split the arguments according to the fact that the segment in~$(H1)$ 
is parallel to the~$x_2$-axis or not.\\
If the segment is parallel to the~$x_2$-axis, we choose
\begin{equation*}
\alpha_n(s)=
\left\{\begin{array}{ll}
(\sign\{x_{01}-x_{n1}\},0)&\quad \textrm{ for }s\in[0,s_{1n}]
\\
(0,\frac{\sign\{x_{02}-x_{n2}\}}{|x_{01}|^{\nu}})&\quad \textrm{for }s\in[s_{1n},s_{2n}]
\end{array}\right.
\end{equation*}
where $s_{1n}=|x_{01}-x_{n1}|$ and $s_{2n}=s_{1n}+|x_{02}-x_{n2}|=|x_{01}-x_{n1}|+|x_{02}-x_{n2}|$. The corresponding trajectory~$y_n$ satisfies $y_n(\cdot)\in \Sigma$ in $[0,s_{2n}]$ and $y_{n}(s_{2n})=x_{0}$.\\
If the segment is not parallel to the~$x_2$-axis, without loss of generality, we can assume that the segment in hypothesis~$(H1)$ 
is:
\[ 
x_2-x_{02}=C(x_1-x_{01}), \,\forall x_1\in \left\{\begin{array}{ll}
&( x_{01},x_{01}+R]\qquad \mbox{if $x_{n2}\geq x_{02}$}\\
&[x_{01}-R,x_{01})\qquad \mbox{if $x_{n2}\leq x_{02}$}
\end{array}\right.
\]
for some positive constant~$C$.
Consider the trajectory $(y_n,\alpha_n)$ on $[0,s_{2n}]$ where the control is
\begin{equation*}
\alpha_n(s)=
\left\{\begin{array}{ll}
(\sign\{\frac{x_{n2}-x_{02}}{C}+x_{01}-x_{n1}\},0)&\quad \textrm{ for }s\in[0,s_{1n}]\\
(\sign\{x_{02}-x_{n2}\},\frac{C\sign\{x_{02}-x_{n2}\}}{|x_{01}+\frac{x_{n2}-x_{02}}{C}+\sign\{x_{02}-x_{n2}\}(s-s_{1n})|^\nu})&\quad \textrm{for }s\in[s_{1n},s_{2n}]
\end{array}\right.
\end{equation*}
and $s_{1n}, s_{2n}\in[0,T]$ must satisfy
\begin{equation*}
s_{2n}\geq s_{1n},\qquad y_{n2}(s_{1n})-x_{02}=C(y_{n1}(s_{1n})-x_{01})\qquad\textrm{and}\qquad y_{n}(s_{2n})=x_{0}.
\end{equation*}
It is straightforward to verify that choosing $s_{2n}=|x_{01}-x_{n1}+\frac{x_{n2}-x_{02}}{C}|+|\frac{x_{n2}-x_{02}}{C}|$, we get $y_{n}(s_{2n})=x_{0}$ and the curve~$y_n(\cdot)\in\Sigma$ in~$[0,s_{2n}]$.\\   
Moreover, the controls $\alpha_n$ are uniformly bounded:  there exist a constant $C_\nu\in\R^+$ and $N\in\N$  such that 
\begin{equation*}
\vert \alpha_n(s) \vert \leq C_\nu(\frac{1}{\vert x_{01}\vert^\nu}+1)\quad \forall n\geq N \quad\forall s\in [0,s_{2n}].
\end{equation*}
Taking $\d_n:=s_{2n}$, we accomplish the proof in this case.

\noindent {Case \ref{caract frontiera}}.
Without loss of generality, we can assume that $x_{n2}>x_{02}$ and that, in hypothesis~$(H2)$, 
the curve contained in $\Sigma$ is 
\[ x_2-x_{02}=Cx_1^{\nu+1}, \textit{ for }x_1>0;\]
indeed, the other cases can be dealt with similarly and we omit them.\\
Consider the trajectory $(y_n,\alpha_n)$ on the time interval $[0,s_{2n}]$ where the control is
\begin{equation*}
\alpha_n(s)=
\left\{\begin{array}{ll}
(-\sign\{x_{n1}-(\frac{x_{n2}-x_{02}}{C})^\frac{1}{\nu+1}\},\ 0)&\quad \textrm{for }s\in[0, s_{1n}]\\
(-1,\ -C(\nu+1))&\quad \textrm{for }s\in[s_{1n},s_{2n}]\\
\end{array}\right.
\end{equation*}
while $s_{1n}$ and $s_{2n}$ are chosen so that $y_{n1}(s_{1n})>0$, $y_{n2}(s_{1n})-x_{02}=C(y_{n1}(s_{1n}))^{\nu+1}$ and, respectively, $y_{n}(s_{2n})=x_0$.\\
It is easy to verify that  $s_{1n}=|x_{n1}-\left (\frac{x_{n2}-x_{02}}{C}\right )^\frac{1}{\nu+1}|$, and 
$$s_{2n}=\left|x_{n1}-\left (\frac{x_{n2}-x_{02}}{C}\right )^\frac{1}{\nu+1}\right|+\left(\frac{x_{n2}-x_{02}}{C} \right)^\frac{1}{\nu+1}.$$\\
Moreover, $y_{n1}(s)>0$ for $s\in [s_{1n},s_{2n})$. Hence,  $y_{n}\in \Sigma$ in $[0,s_{2n} ]$  thanks to the $x_1$-convexity and~$(H2)$.\\
Obviously $\d_n:=s_{2n}\to 0$ as $n\to \infty$ 
and we find a uniform bound for the control
$$|\alpha_n(s)|\leq 1+C(\nu+1)\qquad \forall s\in [0,s_{2n}].$$
{Case \ref{interno}}. 
Without loss of generality, we can assume that $x_{n2}\geq x_{02}$, ~$x_{01}\geq 0$, ~$x_{n1}\geq 0$ because, in the other cases, we can construct similar trajectories joining $x_n$ to $x_0$, see Figure \ref{figura:puntiinterni}.
Let $B_R(x_0)\subset \Sigma$. At least locally, a portion of the curves 
\begin{equation}\label{curvaassey}
x_2-x_{02}=C(x_1^{\nu+1}-x_{01}^{\nu+1})\quad \forall x_1>0, \ C>0
\end{equation}
is contained in $B_R(x_0)$ as well as horizontal segments. 
 Choose the constant $C>0$ in \eqref{curvaassey} so that the intersection of this curve with $x_2=x_{n2}$ is contained in $B_R(x_0)$.
 Thus, to construct $(y_n,\alpha_n)\in\Gamma_{0,\delta_n}[x_n]$, ~$y_n(\delta_n)=x_0$, remaining in $B_R(x_0)$, we first connect along the $x_1-$direction the point $x_n$ with one point on the curve \eqref{curvaassey} and then follow this curve up to reach $x_0$.\\
 Using the same arguments as in cases~\ref{non caract frontiera} and~\ref{caract frontiera}, we obtain that $\delta_n\to 0$ and that  $\alpha_n$ are uniformly bounded.
 \end{proof}
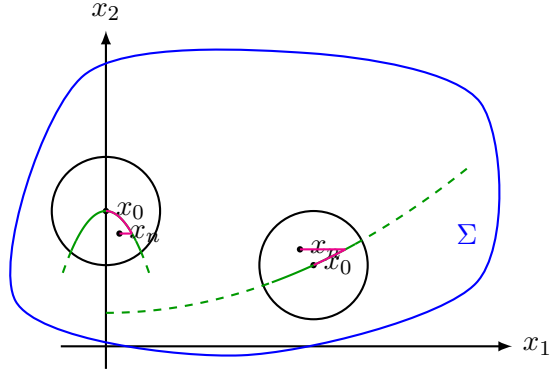
\begin{figure}[h]
\centering
\begin{tikzpicture}[scale=0.6, >=latex]

\draw[->, thick] (-1,0) -- (9,0) node[right]{$x_1$};
\draw[->, thick] (0,-0.5) -- (0,7) node[above]{$x_2$};

\draw[blue, thick]
  plot[smooth cycle] coordinates {
    (-2, 1)
    (-0.5, 6)
    (4.5, 6.5)
    (8.3, 5.4)
    (8.2, 1.4)
    (3, -0.2)
  };
\node[blue] at (8,2.5) {$\Sigma$};
\def\xosx{0}
\def\yosx{3}
\def\rsx{1.2}
\def\aL{1.5}

\draw[black, thick] (\xosx,\yosx) circle (\rsx);
\fill (\xosx,\yosx) circle (2pt) node[right]{$x_0$};

\pgfmathsetmacro\xLs{ -sqrt(\rsx^2/(1+\aL^2)) }
\pgfmathsetmacro\xRs{ +sqrt(\rsx^2/(1+\aL^2)) }

\pgfmathsetmacro\xLsOut{\xLs - 0.1}
\pgfmathsetmacro\xRsOut{\xRs + 0.1}


\draw[green!60!black, thick, dashed, domain=\xLsOut:-1, samples=150]
  plot (\x, {\yosx - \aL*(\x-\xosx)^2});

\draw[green!60!black, thick, domain=\xLs:\xLsOut, samples=150]
  plot (\x, {\yosx - \aL*(\x-\xosx)^2});

\draw[green!60!black, thick, domain=\xLs:\xRs, samples=150]
  plot (\x, {\yosx - \aL*(\x-\xosx)^2});

\draw[green!60!black, thick, domain=\xRs:\xRsOut, samples=150]
  plot (\x, {\yosx - \aL*(\x-\xosx)^2});

\draw[green!60!black, thick, dashed, domain=\xRsOut:1, samples=150]
  plot (\x, {\yosx - \aL*(\x-\xosx)^2});
  
\def\xn{0.3}
\def\yn{2.5}
\fill (\xn,\yn) circle (2pt) node[right]{$x_n$};

\pgfmathsetmacro\xpL{\xosx + sqrt((\yosx - \yn)/\aL)}
\draw[thick, magenta] (\xn,\yn) -- (\xpL,\yn);
\draw[thick, magenta, domain=\xpL:\xosx, samples=150]
  plot (\x, {\yosx - \aL*(\x-\xosx)^2});

\def\xodx{4.6}
\def\yodx{1.8}
\def\rdx{1.2}

\draw[black, thick] (\xodx,\yodx) circle (\rdx);
\fill (\xodx,\yodx) circle (2pt) node[right]{$x_0$};

\def\aR{0.05}
\pgfmathsetmacro\vR{\yodx - \aR*(\xodx)^2}

\pgfmathsetmacro\xLd{3.676}  
\pgfmathsetmacro\xRd{5.524}

\draw[green!60!black, thick, dashed, domain=0:\xLd, samples=200]
  plot (\x, {\vR + \aR*(\x)^2});
\draw[green!60!black, thick, dashed, domain=\xRd:8, samples=200]
  plot (\x, {\vR + \aR*(\x)^2});

\draw[green!60!black, thick, domain=\xLd:\xRd, samples=200]
  plot (\x, {\vR + \aR*(\x)^2});

\def\xm{4.3}
\def\ym{2.15}
\fill (\xm,\ym) circle (2pt) node[right]{$x_n$};

\pgfmathsetmacro\xpR{ sqrt((\ym - \vR)/\aR) }

\draw[thick, magenta] (\xm,\ym) -- (\xpR,\ym);
\draw[thick, magenta, domain=\xpR:\xodx, samples=200]
  plot (\x, {\vR + \aR*(\x)^2});
  \end{tikzpicture}
  \caption{Reachability at interior points}\label{figura:puntiinterni}
  \end{figure}

\begin{remark} \label{RK3.2}
In assumption~$(H2)$, 
the curves $x_2=x_{02}+Cx_1^{\nu+1}$ can be replaced by any curve of the form $x_2=x_{02}+Cx_1^{\rho}$ with $\rho>\nu+1/2$. On the other hand, in condition~$(H1)$ 
the segments can be replaced by curves of the form $x_2=x_{02}+Cx_1^{\rho}$ with $\rho\ne 0$.
\end{remark}
\begin{remark}\label{RK4.4}
A set~$\Sigma$ may fulfill assumptions~$(H1)$ and~$(H2)$ 
stated above even if it has empty interior; for example, in the standard Grushin case with $\nu=1$, $\Sigma=\{(x_1,x_2)\;\mid\; x_2=x_1|x_1|,\,|x_1|\leq 1\}$.\\
An example of set~$\Sigma$, with not empty interior, fulfilling assumptions~$(H1)$ and~$(H2)$ 
is when, in each $(x_1,x_2)\in\partial \Sigma$ with $x_{1}\ne 0$, it fulfills an interior cone condition while, in each $(0,x_2)\in\partial \Sigma$, it fulfills a ``curved'' interior cone condition modeled on the curves $x_2=x_{02}\pm C(|x_1|)^{\nu+1}$, (see Figure \ref{figura:conostorto}). A classical cone condition at points $(0,x_{2})$ does not ensure the reachability property (see Example \ref{example1}).
\end{remark}

\begin{figure}[h]
\centering
\begin{tikzpicture}[scale=0.8]

\draw[->] (-2,0) -- (2,0) node[right] {$x_1$};
\draw[->] (0,-1) -- (0,2) node[above] {$x_2$};

\draw[thick,blue,domain=0:1,samples=200] plot (\x,{\x*\x});        
\draw[thick,blue,domain=0:1,samples=200] plot (\x,{3*\x*\x}); 

\foreach \x in {0,0.1,...,1}{
  \draw[blue] (\x,{\x*\x}) -- (\x,{3*\x*\x});
}



\end{tikzpicture}
\caption{Curved cone }\label{figura:conostorto}
\end{figure}
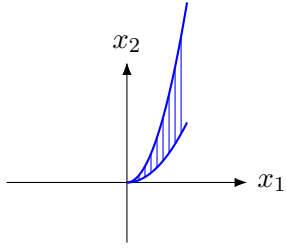

\section{The value function}\label{sect:vf}
In this section, we investigate the continuity of the value function associated with the cost~$J_t$ defined in~\eqref{eq:46}. 
Note that, in the case of state constraints, since our dynamics do not satisfy a strong controllability assumption, the continuity of the value function is not a priori guaranteed by the nowadays classical {\it inward pointing condition}, see~\cite{Soner}.

\begin{definition}
The value function associated with the cost~\eqref{eq:46} and dynamics~\eqref{grugen} is 
\begin{equation*}
u(x,t):=\inf\limits_{(y,\alpha)\in\Gamma_t[x]}J_t(x;(y,\alpha)).
\end{equation*}
\end{definition}

In the following Lemma, we establish the continuity with respect to time locally uniformly in space at the final time for the value function.
\begin{lemma}\label{lem:cnc_T}
The value function~$u(x,\cdot)$ is continuous in~$T^-$ locally uniformly in~$x$, namely there holds: for every compact set~$K$, for every~$\epsilon>0$, there exists $\delta>0$ such that
\begin{equation*}
|u(x,t)-g(x)|\leq \epsilon\qquad \forall t\in(t-\delta,T],\quad \forall x\in\Sigma\cap K.
\end{equation*}
\end{lemma}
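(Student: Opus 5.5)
We prove the two inequalities $u(x,t)\le g(x)+\epsilon$ and $u(x,t)\ge g(x)-\epsilon$ separately, for $t$ close to $T$ and uniformly for $x\in\Sigma\cap K$ (we read the range in the statement as $t\in(T-\delta,T]$). Throughout we use that $\ell$ and $g$ are continuous, hence bounded and uniformly continuous on the compact set $(\Sigma\cap \overline{K'})\times[0,T]$, where $K'$ is a fixed compact neighbourhood of $K$. For $\Sigma$ bounded we may simply take $K'=\Sigma$; this is the setting implicitly used in Lemma~\ref{lema:reach_appr}, where $\mathrm{diam}(\Sigma)$ appears.

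\emph{Upper bound.} The constant trajectory $y\equiv x$ with control $\alpha\equiv 0$ is admissible for any $x\in\Sigma$, since it stays at $x\in\Sigma$ and trivially satisfies \eqref{grugen}. Hence
\[
u(x,t)\le J_t(x;(x,0))=\int_t^T\ell(x,\tau)\,d\tau+g(x)\le g(x)+(T-t)\,\|\ell\|_{L^\infty(\Sigma\cap K\times[0,T])}.
\]
This gives $u(x,t)-g(x)\le\epsilon$ as soon as $T-t<\delta_1:=\epsilon/(1+\|\ell\|_\infty)$.

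\emph{Lower bound.} By Remark~\ref{prp:ex_OT}, take $(y,\alpha)\in\Gamma^{\rm opt}_t[x]$. By the upper bound just proved,
\[
\tfrac12\|\alpha\|_{L^2(t,T)}^2\le g(x)+(T-t)\|\ell\|_\infty-\int_t^T\ell(y,\tau)\,d\tau-g(y(T))\le M,
\]
where $M$ depends only on bounds for $g$ and $\ell$. The main point is to show that $y(T)$ is close to $x$. From \eqref{grugen} we get $|y_1(s)-x_1|\le\sqrt{T-t}\,\sqrt{2M}$. Thus $|y_1|$ stays bounded by $R_K+1$ on $[t,T]$, where $R_K$ bounds $|x_1|$ on $K$, provided $T-t\le 1/(2M)$. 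It follows that $|\dot y_2|\le (R_K+1)^\nu|\alpha_2|$, and so
\[
|y(T)-x|\le (1+(R_K+1)^\nu)\sqrt{2M}\sqrt{T-t}.
\]
This a priori argument first confines $y_1$ and then $y_2$; we expect it to be the only delicate step, since it is what rules out escape along the degenerate direction when $\Sigma$ is unbounded. It also keeps $y$ inside a fixed compact set $K'$. Consequently
\[
u(x,t)=J_t(x;(y,\alpha))\ge -(T-t)\|\ell\|_{L^\infty(K'\times[0,T])}+g(y(T)),
\]
and uniform continuity of $g$ on $K'$ gives $g(y(T))\ge g(x)-\epsilon/2$ once $T-t$ is small. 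Choosing $\delta$ as the minimum of the finitely many smallness thresholds above completes the proof.
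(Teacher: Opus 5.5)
Your proof is correct and follows the same route as the paper. The null control gives the upper bound $u(x,t)-g(x)\le (T-t)\|\ell\|_\infty$, and an optimal trajectory gives the lower bound; your a priori $L^2$ bound on $\alpha$ and the confinement of $y_1$ and then $y_2$ make explicit the ``standard arguments'' behind the paper's $o(1)$ term and its choice of $K'$. Note that your constant $M$ relies on global bounds for $\ell$ and $g$ (or on boundedness of $\Sigma$), which is the same implicit standing assumption the paper uses elsewhere.
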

\begin{proof}
Consider $(x,t)\in(\Sigma\cap K)\times[0,T]$ and set $K':=(K+B_1(0))\cap \Sigma$. Since the trajectory starting from~$x$ at time~$t$ with control~$\alpha\equiv 0$ is admissible, we have $u(x,t)\leq \int_t^T \ell(x,\tau)d\tau+g(x)$, namely
\begin{equation}\label{eq:cnc_T1}
u(x,t)-g(x)\leq \|\ell\|_{L^\infty(K'\times[0,T])} (T-t).
\end{equation}
On the other hand, consider $(y,\alpha)\in\Gamma^{\textrm{opt}}_t[x]$. By standard arguments, we have
\begin{equation*}
u(x,t)\geq \int_t^T \ell(y(\tau),\tau)d \tau +g(y(T))\\
\geq -\|\ell\|_{L^\infty(K'\times[0,T])} (T-t)+o(1)+g(x)
\end{equation*}
where $o(1)\to 0$ as $t\to T^-$.
This last inequality and relation~\eqref{eq:cnc_T1} easily imply the statement.
\end{proof}
For the sake of completeness, in the following Lemma we establish that the value function~$u(\cdot,t)$ is LSC. Its proof relies on the sole closed graph property. 

\begin{lemma}\label{lemma3}
Assume that the closed graph property holds true for any $(x,t)\in\Sigma\times[0,T)$. Then, the function~$u(\cdot,t)$ is LSC for any $t\in[0,T]$. 
\end{lemma}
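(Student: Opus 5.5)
To show that $u(\cdot,t)$ is lower semicontinuous, I fix $t\in[0,T]$, a point $x\in\Sigma$ and a sequence $x_n\in\Sigma$ with $x_n\to x$. The goal is
\[
\liminf_{n} u(x_n,t)\ge u(x,t).
\]
Passing to a subsequence, I may assume that $u(x_n,t)$ converges to this $\liminf$. For $t=T$ there is nothing to prove, since $u(\cdot,T)=g$ is continuous. So I take $t<T$.

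By Remark~\ref{prp:ex_OT} (applied at the initial time $t$), for each $n$ I pick an optimal trajectory $(y_n,\alpha_n)\in\Gamma^{\rm opt}_t[x_n]$, so that $u(x_n,t)=J_t(x_n;(y_n,\alpha_n))$.

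\textbf{Compactness.} Comparing with the constant trajectory $\alpha\equiv0$ gives
\[
J_t(x_n;(y_n,\alpha_n))\le \int_t^T \ell(x_n,\tau)\,d\tau+g(x_n),
\]
which is bounded uniformly in $n$. Since $\ell$ and $g$ are bounded on the relevant compact set, this yields a uniform bound on $\|\alpha_n\|_{L^2(t,T)}$. As in the proof of Lemma~\ref{lema:reach_appr}, the dynamics~\eqref{grugen} give
\[
|\dot y_n|\le (1+\mathrm{diam}(\Sigma)^\nu)|\alpha_n|.
\]
Hence the curves $y_n$ are equi-H\"older with exponent $1/2$ and start at the bounded points $x_n$. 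By Ascoli--Arzel\`a, up to a subsequence, $y_n\to y$ uniformly on $[t,T]$.

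\textbf{Closing the argument.} At this point the closed graph property (in its version at initial time $t$, which is the hypothesis for $(x,t)$) gives a measurable $\alpha$ with $(y,\alpha)\in\Gamma^{\rm opt}_t[x]$, so that $u(x,t)=J_t(x;(y,\alpha))$. Moreover, the proof of Proposition~\ref{prp:prop1} shows that this $\alpha$ can be taken to be the weak $L^2$ limit of (a subsequence of) the $\alpha_n$. Weak lower semicontinuity of the $L^2$ norm, together with uniform convergence of $y_n$ and continuity of $\ell$ and $g$, then gives
\[
\liminf_n J_t(x_n;(y_n,\alpha_n))\ge J_t(x;(y,\alpha))=u(x,t),
\]
which is the claim.

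Alternatively, the inequality is immediate without using the weak limit explicitly: $(y,\alpha)$ is optimal, so $u(x,t)=J_t(x;(y,\alpha))$. On the other hand, any weak limit $\tilde\alpha$ of $\alpha_n$ produces an admissible pair $(y,\tilde\alpha)$ from $x$ with cost at most $\liminf_n u(x_n,t)$, by weak lower semicontinuity. Hence
\[
u(x,t)\le J_t(x;(y,\tilde\alpha))\le \liminf_n u(x_n,t).
\]
In this form only the lower semicontinuity of $J$ and the fact that admissibility passes to the limit are used.

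\textbf{Main obstacle.} The delicate step is checking that the limit pair is admissible from $x$ at time $t$. One must show that $y$ solves~\eqref{grugen} with the weak limit control. This uses that $|y_{n1}|^\nu\to|y_1|^\nu$ uniformly, so the product $|y_{n1}|^\nu\alpha_{n2}$ converges weakly to $|y_1|^\nu\alpha_2$. One must also show that $y$ stays in $\Sigma$, which follows because $\Sigma$ is closed and the convergence is uniform. These are the facts already invoked in Proposition~\ref{prp:prop1}, and I would reuse them verbatim, with the time interval $[0,T]$ replaced by $[t,T]$.
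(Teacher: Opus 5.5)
Your proof is correct, and its main line is the paper's own: you take optimal $(y_n,\alpha_n)$ from $x_n$, extract $y_n\to y$ uniformly and $\alpha_n\rightharpoonup\alpha$ weakly in $L^2$, invoke the closed graph property, and conclude by weak lower semicontinuity of the cost. Along the way you supply what the paper leaves to ``standard theory'':
\begin{itemize}
\item the $L^2$ bound, by comparison with $\alpha\equiv 0$;
\item the equi-H\"older estimate;
\item the extraction of a subsequence realizing the $\liminf$;
\item the weak passage to the limit in $|y_{n1}|^\nu\alpha_{n2}$.
\end{itemize}

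Your second closing argument deserves more than the label ``alternative'': it shows the closed graph hypothesis is not needed at all. Since $u(x,t)$ is an infimum over $\Gamma_t[x]$, the admissible limit $(y,\tilde\alpha)$ already suffices. Its cost is at most $\liminf_n u(x_n,t)<\infty$, so it lies in $\Gamma_t[x]$ and gives $u(x,t)\le\liminf_n u(x_n,t)$ directly. Hence $u(\cdot,t)$ is lower semicontinuous under the standing assumptions alone. These are: $\Sigma$ closed, $\ell$ and $g$ continuous, and the compactness you derived. The hypotheses of Proposition~\ref{prp:prop1} are genuinely used only for the upper semicontinuity half of Proposition~\ref{prp:cont_x}.

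The second route also repairs a small imprecision that your first route shares with the paper's text. The closed graph property, as defined, only provides \emph{some} control making $y$ optimal, not necessarily the weak limit of the $\alpha_n$. For Grushin dynamics, $\alpha_2$ is not determined by $y$ on $\{y_1=0\}$. So your appeal to ``the proof of Proposition~\ref{prp:prop1}'' is not licensed by the abstract hypothesis of the lemma. This is harmless, because $u(x,t)\le J_t(x;(y,\tilde\alpha))$ holds for every admissible control $\tilde\alpha$ of $y$.
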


\begin{proof}
Fix $x\in\Sigma$. For $t=T$, the statement stems from the continuity of the function~$g$. Consider $t=0$; the cases for $t\in(0,T)$ are similar so we shall omit them. We want to prove: $u(x,0)\leq \liminf_{y\to x}u(y,0)$. Namely, we want to prove that, for any sequence~$\{x_n\}_{n\in\N}$ with $x_n\in\Sigma$ and $x_n\to x$ as $n\to\infty$, there holds
\begin{equation*}
u(x,0)\leq \lim_{n\to\infty}u(x_n,0).
\end{equation*}
To this end, fix such a sequence~$\{x_n\}_{n\in\N}$ and, for each $n\in\N$, consider a trajectory $(y_n,\alpha_n)\in\Gamma^{\textrm{opt}}[x_n]$. Hence, by standard theory, we can assume that there exists a trajectory $(y,\alpha)\in\Gamma[x]$ such that: $y_n\to y$ uniformly in $[0,T]$ and $\alpha_n\rightharpoonup \alpha$ weakly in $L^2(0,T)$. The closed graph property ensures that $(y,\alpha)\in\Gamma^{\textrm{opt}}[x]$. By the LSC of the cost~$J$ with respect to the control and the continuity of~$\ell$, we conclude
\begin{equation*}
u(x,0)=J(x;(y,\alpha))\leq \lim_{n\to \infty}J(x_n;(y_n,\alpha_n))=\lim_{n\to \infty} u(x_n,0).
\end{equation*}
\end{proof}
In the next Proposition, we prove that the assumptions in Proposition~\ref{prp:prop1} are sufficient for the continuity of the value function~$u(\cdot,t)$.
\begin{proposition}\label{prp:cont_x}
Fix $x\in\Sigma$ and assume that either assumption~$(i)$ or assumption~$(ii)$ of Proposition~\ref{prp:prop1} holds true. Then, the value function~$u(\cdot,t)$ is continuous in~$x$ for every $t\in[0,T]$.
\end{proposition}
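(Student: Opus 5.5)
We split continuity of $u(\cdot,t)$ at $x$ into lower and upper semicontinuity along any sequence $x_n\in\Sigma$, $x_n\to x$. As in Lemma~\ref{lemma3}, we work at $t=0$; the case $t\in(0,T)$ is identical after replacing $[0,T]$ by $[t,T]$, using the time-$t$ versions of the approximation property and of unreachability. The case $t=T$ is trivial since $u(\cdot,T)=g$. For lower semicontinuity we repeat the argument of Lemma~\ref{lemma3}, but only at the point $x$. Pick $(y_n,\alpha_n)\in\Gamma^{\rm opt}[x_n]$ (nonempty by Remark~\ref{prp:ex_OT}) along a subsequence realizing $\liminf_n u(x_n,0)$. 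Optimality gives $\|\alpha_n\|_2$ bounded: compare with the constant trajectory, and use that $\ell,g$ are bounded below on the compact set containing all trajectories, since $\Sigma$ is implicitly bounded as in Lemma~\ref{lema:reach_appr}. By Ascoli, $y_n\to y$ uniformly (equi-Hölder from $L^2$ bounds) and $\alpha_n\rightharpoonup\alpha$ in $L^2$. Proposition~\ref{prp:prop1}, under either (i) or (ii), gives $(y,\alpha)\in\Gamma^{\rm opt}[x]$, and weak lower semicontinuity of $\|\cdot\|_2^2$ plus continuity of $\ell,g$ give $u(x,0)=J(x;(y,\alpha))\le\liminf_n u(x_n,0)$.

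For upper semicontinuity we separate the two assumptions. Under (i), take $(\bar y,\bar\alpha)\in\Gamma^{\rm opt}[x]$. The approximation property yields $(y_n,\alpha_n)\in\Gamma[x_n]$ with $J(x_n;(y_n,\alpha_n))\to J(x;(\bar y,\bar\alpha))=u(x,0)$ by \eqref{eq:lemma1}-(iii). Hence $\limsup_n u(x_n,0)\le u(x,0)$.

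Under (ii), the approximation property is unavailable, and this is the main obstacle. We first use the argument in Case 2 of Proposition~\ref{prp:prop1}: every $(\hat y,\hat\alpha)\in\Gamma[x]$ has $\hat y(T)=x$. Otherwise, reversing time would give a finite-cost trajectory from $\hat y(T)\neq x$ to $x$. The same argument applied at intermediate times, by restricting, reversing, and then extending with the zero control, shows $\hat y\equiv x$. So $\Gamma[x]=\{(x,0)\}$ and $u(x,0)=\int_0^T\ell(x,\tau)d\tau+g(x)$. Then we use the comparison trajectory $(x_n,0)$, which stays at $x_n\in\Sigma$ and is admissible. It gives $u(x_n,0)\le\int_0^T\ell(x_n,\tau)d\tau+g(x_n)$, whose right-hand side tends to $u(x,0)$ by continuity of $\ell$ and $g$. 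This yields the upper bound. Combining it with the lower bound proves continuity at $x$.

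The delicate points are the identification $\Gamma[x]=\{(x,0)\}$ under (ii), and checking that the time-$t$ analogues of the hypotheses hold. For the unreachability case, the time-$t$ version is harmless, because reversing and padding with zero controls moves reaching times freely within $[0,T]$.
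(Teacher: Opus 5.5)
Your proof is correct and follows essentially the paper's route: lower semicontinuity at $x$ from the closed graph property of Proposition~\ref{prp:prop1} via the argument of Lemma~\ref{lemma3}, then the upper bound from the approximation property under (i), or from the rest trajectory $(x_n,0)$ under (ii). Your intermediate-time reversal argument also supplies a step the paper leaves implicit (it only checks the endpoint $\bar y(T)$), namely that every finite-cost trajectory from an unreachable $x$ is constant. Two small points. First, on the axis $x_1=0$ that argument only forces the curve to be constant, since controls $(0,\alpha_2)$ remain admissible; so you should write $\Gamma^{\rm opt}[x]=\{(x,0)\}$ rather than $\Gamma[x]=\{(x,0)\}$, and this still gives $u(x,0)=\int_0^T\ell(x,\tau)\,d\tau+g(x)$. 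Second, the time-$t$ approximation property you invoke follows from the time-$0$ one by the linear time rescaling $s\mapsto t+s(T-t)/T$, because the dynamics are linear in $\alpha$.
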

\begin{proof}For $t=T$, the statement is just the continuity of the function~$g$. Consider $t=0$; the cases for $t\in(0,T)$ are similar so we shall omit them.
By Proposition~\ref{prp:prop1}, $\Gamma^{\rm{opt}}[x]$ has the closed graph property; hence, Lemma~\ref{lemma3} ensures that the function~$u(\cdot,0)$ is LSC at~$x$. In order to accomplish the proof, we proceed by contradiction, assuming that there exists a sequence $\{x_n\}_n$, with $x_n\to x$ as $n\to\infty$, such that
\begin{equation*}
u(x,0)<\lim_{n}u(x_n,0).
\end{equation*}
We split our arguments according to the fact that assumption~$(i)$ or assumption~$(ii)$ of Proposition~\ref{prp:prop1} holds true.\\
Let assumption~$(i)$ hold true. Consider $(y,\alpha)\in\Gamma^{\textrm{opt}}[x]$; the approximation property guarantees that there exists a sequence of trajectories $\{(y_n,\alpha_n)\}_{n\in\N}$ fulfilling $(y_n,\alpha_n)\in\Gamma[x_n]$ and properties~\eqref{eq:lemma1}. Therefore, we have
\begin{equation*}
u(x_n,0)\leq J(x_n;(y_n,\alpha_n)).
\end{equation*}
Passing to the limit, by~\eqref{eq:lemma1}-$(iii)$, we get 
\begin{equation*}
\lim_{n\to\infty}u(x_n,0)\leq \lim_{n\to\infty}J(x_n;(y_n,\alpha_n))=J(x;(y,\alpha))=u(x,0)
\end{equation*}
which gives the desired contradiction.\\
Let assumption~$(ii)$ hold true. The set $\Gamma^{\textrm{opt}}[x]$ reduces to the singleton $\{(x,0)\}$ formed by the trajectory starting from~$x$ with null control; hence, $u(x,0)=\int_0^T\ell(x,\tau)\, d\tau+g(x)$. On the other hand, by definition of value function, we have
\begin{equation*}
u(x_n,0)\leq J(x_n,(x_n,0)) = \int_0^T\ell(x_n,\tau)\, d\tau+g(x_n)
\end{equation*}
where $(x_n,0)$ is the trajectory starting from~$x_n$ with null control. By continuity of~$\ell$ and~$g$, we get $\displaystyle \lim_{n\to\infty}u(x_n,0)\leq u(x,0)$, which gives the desired contradiction.
\end{proof}

In the next statement, we provide a sufficient condition for the continuity of the value function in both variables $(x,t)$ which amounts to a ``uniform'' reachability property.

\begin{theorem}\label{thm:u_cont2}
Assume that there exists a modulus of continuity~$\omega$ such that for any $x_1,x_2\in\Sigma$ there exist $\delta> 0$ and a trajectory $(y,\alpha)\in\Gamma_{0,\delta}[x_1]$ with $y(\delta)=x_2$ and
\begin{equation}\label{cnd:u_cont}
\delta\leq \omega(|x_1-x_2|),\qquad
\|\alpha\|_{L^2(0,\delta)}\leq \omega(|x_1-x_2|).
\end{equation}
Then, the value function~$u$ is continuous on $\Sigma\times[0,T]$.
\end{theorem}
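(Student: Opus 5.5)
The plan is to prove two one-sided inequalities: an upper bound for $u(x',t')-u(x,t)$ and the symmetric bound with the roles exchanged. Both will follow from one estimate of the form
\[
u(x_1,t_1)\le u(x_2,t_2)+\tilde\omega\bigl(|x_1-x_2|+|t_1-t_2|\bigr),
\]
valid for all $(x_1,t_1),(x_2,t_2)\in\Sigma\times[0,T]$ close enough. Here $\tilde\omega$ is a modulus depending only on $\omega$, $T$, $\|\ell\|_\infty$ (on the relevant compact set, $\Sigma$ being implicitly bounded as in Lemma~\ref{lema:reach_appr}), the moduli of continuity of $\ell$ and $g$, and a uniform bound $M$ on $\|\alpha\|_{L^2}$ for optimal controls. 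Such an $M$ exists because comparison with the null control gives $\frac12\|\alpha\|_2^2\le 2T\|\ell\|_\infty+2\|g\|_\infty$ for any optimal pair. Since the estimate is symmetric in the two points, it yields joint continuity. Near $t=T$ we can also invoke Lemma~\ref{lem:cnc_T}, so we may restrict to $t_1,t_2\le T-\eta$ for a fixed small $\eta$ when convenient.

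The construction mimics Lemma~\ref{lema:reach_appr} with time rescaling. Take $(y,\alpha)\in\Gamma^{\rm opt}_{t_2}[x_2]$, which exists by Remark~\ref{prp:ex_OT}. By hypothesis~\eqref{cnd:u_cont} there is a connecting trajectory $(\tilde y,\tilde\alpha)$ from $x_1$ to $x_2$ on a time interval of length $\delta\le\omega(|x_1-x_2|)$ with $\|\tilde\alpha\|_{L^2}\le\omega(|x_1-x_2|)$. Shift it to start at time $t_1$, so it reaches $x_2$ at time $t_1+\delta$. On $[t_1+\delta,T]$ we follow $y$ reparametrized affinely: set $\hat y(s)=y(\phi(s))$ with $\phi$ the affine bijection $[t_1+\delta,T]\to[t_2,T]$, and $\hat\alpha(s)=\lambda\,\alpha(\phi(s))$ with $\lambda=(T-t_2)/(T-t_1-\delta)$. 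The dynamics~\eqref{grugen} are invariant under this rescaling, since both components scale by $\lambda$, and the constraint is preserved. This requires $t_1+\delta<T$, which holds for points close enough once we are at distance $\eta$ from $T$.

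Next we estimate the cost difference. For the control part,
\[
\|\hat\alpha\|^2_{L^2(t_1+\delta,T)}=\lambda\|\alpha\|^2_{L^2(t_2,T)},
\]
so $|\lambda-1|\,M^2\le C(|t_1-t_2|+\delta)/\eta$. Adding $\omega^2$ for the connecting piece, the total control contribution is small. For the running cost, the piece on $[t_1,t_1+\delta]$ contributes at most $\|\ell\|_\infty\delta$. The rescaled piece differs from $\int_{t_2}^T\ell(y,\tau)d\tau$ by terms controlled by $|\lambda-1|$ and the modulus of continuity of $\ell$ in time, evaluated at $|\phi(s)-s|\le|t_1-t_2|+\delta$. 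The terminal costs coincide because $\hat y(T)=y(T)$. Hence $u(x_1,t_1)\le J_{t_1}(x_1;(\hat y,\hat\alpha))\le u(x_2,t_2)+\tilde\omega$.

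The main obstacle is handling times close to $T$, where $\lambda$ blows up and $t_1+\delta$ may exceed $T$. I would try first to absorb this regime with Lemma~\ref{lem:cnc_T}. If $t_1$ or $t_2$ lies in $(T-\eta,T]$, both values are within $\epsilon$ of $g(x_i)$ (this requires both times to be close to $T$, which holds when the points are close), and $g$ is uniformly continuous. So for each $\epsilon$ we choose $\eta$ from Lemma~\ref{lem:cnc_T} first, and then the closeness threshold for the main estimate.
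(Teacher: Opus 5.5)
Your proposal is correct and follows essentially the same route as the paper. The paper likewise concatenates the connecting trajectory from \eqref{cnd:u_cont}, started at time $t_1$, with the affinely time-rescaled optimal trajectory from $(x_2,t_2)$ on $[t_1+\delta,T]$. It then estimates the cost difference for times bounded away from $T$, swaps the roles of the two points, and treats times near $T$ with Lemma~\ref{lem:cnc_T} and the continuity of $g$. Your explicit bound on optimal controls and your splitting of the time interval at $T-\eta$ just make precise what the paper leaves implicit.
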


\begin{proof}
{\it Step $1$.} For $T_1\in[0,T)$, we want to prove that $u$ is continuous in $\Sigma\times[0,T_1]$. To this end, consider $(x_1,t_1),(x_2,t_2)\in\Sigma\times [0,T_1]$ with $\omega(|x_1-x_2|)\leq (T-T_1)/2$. By our assumption, there exist a $\delta = \delta(x_1, x_2)> 0$ and a trajectory $(\tilde y,\tilde \alpha)\in\Gamma_{t_1,t_1+\delta}[x_1]$, with $\tilde y(t_1+\delta)=x_2$ and 
\begin{equation}\label{cnd:u_cont2}
\delta\leq \omega(|x_1-x_2|)\leq \frac{T-T_1}{2},\qquad
\|\tilde\alpha\|_{L^2(t_1,t_1+\delta)}\leq \omega(|x_1-x_2|).
\end{equation}
In particular, note that $t_1+\delta\leq (T+T_1)/2<T$.\\
Fix $(y_2,\alpha_2)\in\Gamma^{\textrm{opt}}_{t_2}[x_2]$ and introduce the trajectory $(y,\alpha)\in \Gamma_{t_1}[x_1]$ obtained by the concatenation of $(\tilde y,\tilde \alpha)$ with $(y_2,\alpha_2)$ rescaled on the time interval $[t_1+\delta, T]$, namely
\begin{equation*}
\alpha (s)=\left\{\begin{array}{ll}
\tilde \alpha(s)&\quad\textrm{for }s\in[t_1,t_1+\delta]\\
\frac{T-t_2}{T-t_1-\delta}\alpha_2\left(\frac{T-t_2}{T-t_1-\delta}s-\frac{T(\delta+t_1-t_2)}{T-t_1-\delta} \right)&\quad\textrm{for }s\in(t_1+\delta,T].
\end{array}\right.
\end{equation*}
We have
\begin{equation*}
y(s)=\left\{\begin{array}{ll}
\tilde y(s)&\quad\textrm{for }s\in[t_1,t_1+\delta]\\
y_2\left(\frac{T-t_2}{T-t_1-\delta}s-\frac{T(\delta+t_1-t_2)}{T-t_1-\delta} \right)&\quad\textrm{for }s\in(t_1+\delta,T]
\end{array}\right.
\end{equation*}
and, in particular, $y(T)=y_2(T)$.\\
By the definition of value function and by the optimality of~$(y_2,\alpha_2)$ for $(x_2,t_2)$, we have
\begin{eqnarray}\notag
u(x_1,t_1)&\leq& J_{t_1}(x_1;(y,\alpha))
=u(x_2,t_2)+\frac{1}{2}\|\tilde\alpha\|_{L^2(t_1,t_1+\delta)}+\frac{1}{2}\frac{t_1-t_2+\delta}{T-t_1-\delta}\|\alpha_2\|^2_{L^2(t_2,T)}\\ \label{cnd:u_cont3}
&&\quad+\int_{t_1}^{t_1+\delta}\ell(y(\tau),\tau)d\tau
+\int_{t_1+\delta}^{T}\ell(y(\tau),\tau)d\tau-\int_{t_2}^{T}\ell(y_2(\tau),\tau)d\tau. 
\end{eqnarray}
We now estimate the several contribution of the last term.
Since now on in this proof, we denote by $C$ a constant that may change from line to line and depends only on~$T_1$.\\
Recall that the $L^2$-norm of optimal controls are uniformly bounded; hence, we have
\begin{equation*}
\frac{1}{2}\frac{t_1-t_2+\delta}{T-t_1-\delta}\|\alpha_2\|^2_{L^2(t_2,T)}\leq C\frac{|t_1-t_2|+\delta}{T-t_1-\delta}
\leq C(|t_1-t_2|+\delta).
\end{equation*}
By the boundedness of the function~$\ell$, we infer
\begin{equation*}
\int_{t_1}^{t_1+\delta}\ell(y(\tau),\tau)d\tau\leq C \delta.
\end{equation*}
Furthermore, we have
\begin{equation*}
\begin{array}{l}
\int_{t_1+\delta}^{T}\ell(y(\tau),\tau)d\tau-\int_{t_2}^{T}\ell(y_2(\tau),\tau)d\tau\\
\qquad=\frac{T-t_1-\delta}{T-t_2}\int_{t_2}^{T}\ell\left(y_2(\tau), \frac{\tau(T-t_1-\delta)+T(\delta+t_1-t_2)}{T-t_2}\right)d\tau -\int_{t_2}^{T}\ell(y_2(\tau),\tau)d\tau\\
\qquad=\frac{T-t_1-\delta}{T-t_2}\left[\int_{t_2}^{T}\ell\left(y_2(\tau), \frac{\tau(T-t_1-\delta)+T(\delta+t_1-t_2)}{T-t_2}\right)d\tau-\int_{t_2}^{T}\ell(y_2(\tau),\tau)d\tau\right]\\
\qquad\quad+\left[\frac{T-t_1-\delta}{T-t_2}-1\right]\int_{t_2}^{T}\ell(y_2(\tau),\tau)d\tau\\
\qquad\leq C\omega_\ell\left(C\left(|t_1-t_2|+\delta\right)\right)+C\left(|t_1-t_2|+\delta\right)
\end{array}
\end{equation*}
where $\omega_\ell$ is the modulus of continuity of the function~$\ell$ in a suitable set~$(\Sigma\cap B_R(x_2))\times[0,T]$.\\
Replacing all these estimates in inequality~\eqref{cnd:u_cont3} and taking into account~\eqref{cnd:u_cont2}, we obtain
\begin{equation*}
\begin{array}{rcl}
u(x_1,t_1)-u(x_2,t_2)&\leq& \omega(|x_1-x_2|)+C(|t_1-t_2|+\delta)+C \delta+C\omega_\ell\left(C\left(|t_1-t_2|+\delta\right)\right)\\&&+C\left(|t_1-t_2|+\delta\right).
\end{array}
\end{equation*}
By relation~\eqref{cnd:u_cont2}, we have $\delta\leq \omega(|x_1-x_2|)$; hence, the previous inequality entails that there exists a modulus of continuity~$\tilde \omega$ such that  
\begin{equation*}
u(x_1,t_1)-u(x_2,t_2)\leq \tilde \omega(|x_1-x_2|+|t_1-t_2|).
\end{equation*}
Reversing the role of $(x_1,t_1)$ and $(x_2,t_2)$, we accomplish the proof of Step~$1$.

\noindent {\it Step $2$.} Taking into account Step~$1$, it remains to prove the continuity in $\Sigma \times[T_1,T]$ where $T_1$ is arbitrary. To this end, for $x_1$ and $x_2$ in a compact set $K\subset \Sigma$, we have
\begin{equation*}
|u(x_1,t_1)-u(x_2,t_2)|\leq |u(x_1,t_1)-g(x_1)|+|u(x_2,t_2)-g(x_2)|+|g(x_1)-g(x_2)|.
\end{equation*}
Lemma~\ref{lem:cnc_T} ensures that for every $\epsilon>0$, there exists $T_1\in[0,T)$ such that $|u(x,t)-g(x)|\leq \epsilon$ for every $(x,t)\in K\times(T_1,T)$.  
Choosing $t_1,t_2>T_1$ and $x_1$ and $x_2$ such that $|g(x_1)-g(x_2)|<\epsilon$, we accomplish the result.
\end{proof}

\begin{corollary}
Under the assumptions of Lemma~\ref{lemma1gen}, the value function~$u$ is continuous on $\Sigma\times[0,T]$.
\end{corollary}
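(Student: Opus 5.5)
The plan is to obtain the corollary from Theorem~\ref{thm:u_cont2}. What has to be checked is that assumptions $(H1)$-$(H2)$ give the \emph{uniform} reachability condition~\eqref{cnd:u_cont}. The reachability property established in Lemma~\ref{lemma1gen} is pointwise, so the work is to upgrade it to a modulus~$\omega$ that does not depend on the points. I will work on $\Sigma$, or on $\Sigma\cap K$ for an arbitrary compact~$K$; this suffices because continuity is a local property, and Step~2 of the proof of Theorem~\ref{thm:u_cont2} is already local.

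\emph{First step: explicit quantitative bounds.} Fix $x_0\in\Sigma$ and revisit the three cases in the proof of Lemma~\ref{lemma1gen}. For a starting point $z\in\Sigma\cap B_{r}(x_0)$, the constructed trajectory reaching $x_0$ has the following bounds.

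In Case~\ref{non caract frontiera}, the time is $\delta\le C'|z-x_0|$ and the control satisfies $|\alpha|\le C_\nu(|x_{01}|^{-\nu}+1)$. This gives $\|\alpha\|_{L^2}\le C''|z-x_0|^{1/2}$.

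In Case~\ref{caract frontiera}, the time is $\delta\le |z_1|+2\bigl(|z_2-x_{02}|/C\bigr)^{1/(\nu+1)}$ and $|\alpha|\le 1+C(\nu+1)$. This gives a H\"older-type bound $\omega_{x_0}(\rho)=C\rho^{1/(2(\nu+1))}$.

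Interior points are handled in the same way.

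\emph{Second step: uniformity via compactness.} This is the main obstacle. The constants $R$, $C$ in $(H1)$-$(H2)$ and the factor $|x_{01}|^{-\nu}$ all depend on~$x_0$. The factor $|x_{01}|^{-\nu}$ blows up near the degenerate axis, and the constants must also be controlled when the target point $x_2$ is not a point where the hypotheses are imposed with a fixed radius.

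I would first try to handle this by connecting both $x_1$ and $x_2$ to a common nearby ``hub'' point. Cover $\Sigma\cap K$ by finitely many balls $B_{R_i/2}(p_i)$ on which the construction from $p_i$ works with constants $C_i$. For $x_1,x_2$ close together in one ball, join $x_1$ to $p_i$ by the reachability trajectory, then $p_i$ to $x_2$ by the reversed trajectory. Reversing time and the sign of the control keeps admissibility, as in Case~2 of Proposition~\ref{prp:prop1}.

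This alone does not give a cost vanishing as $|x_1-x_2|\to0$, since the distance to $p_i$ need not be small. So the refinement is to use the curve/segment through the \emph{target} $x_2$ itself. I would argue that the constants $R$, $C$ in $(H1)$-$(H2)$ can be chosen locally uniformly, adding this as a standing interpretation of the assumptions if necessary. Near the axis $x_1=0$, I would replace the vertical or sloped segment by the $(H2)$-type curve: Remark~\ref{RK3.2} allows curves $x_2=x_{02}+C|x_1|^{\rho}$ with bounded controls, which avoids the $|x_{01}|^{-\nu}$ blow-up.

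Taking the maximum of the finitely many moduli $\omega_i$ then gives a single $\omega$ satisfying~\eqref{cnd:u_cont}. Theorem~\ref{thm:u_cont2}, applied on each compact piece, yields continuity of $u$ on $\Sigma\times[0,T]$.

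As a fallback, if the uniformity in the second step fails, I would prove the corollary differently. Proposition~\ref{prp:cont_x} with Lemma~\ref{lemma1gen} already gives continuity of $u(\cdot,t)$ for each~$t$. Joint continuity would then follow from repeating Step~1 of Theorem~\ref{thm:u_cont2} with pointwise reachability at the limit point $(x,t)$, together with the time-rescaling estimates there and Lemma~\ref{lem:cnc_T}. This uses sequences $(x_n,t_n)\to(x,t)$ and only the reachability property at~$x$.
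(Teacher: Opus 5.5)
Your main route does not close as written. The step ``the constants $R$, $C$ in $(H1)$-$(H2)$ can be chosen locally uniformly, adding this as a standing interpretation'' adds a hypothesis; it is not derived from $(H1)$-$(H2)$. Remark~\ref{RK3.2} also does not let you use $(H2)$-type curves at boundary points with small $x_{01}\neq0$. That remark only weakens what one must assume lies in $\Sigma$. At such points you really have only the segment of $(H1)$ and the bound $|\alpha|\le C_\nu(|x_{01}|^{-\nu}+1)$. The maximum of finitely many moduli attached to fixed centres $p_i$ says nothing about pairs that are close to each other but not to any $p_i$, as you noticed yourself.

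The missing idea is to let the hub be the limit point. Define $\rho(x,z)$ as the infimum of $\delta+\|\alpha\|_{L^2(0,\delta)}$ over admissible trajectories from $z$ reaching $x$ at time $\delta$. Concatenation gives $\rho(x,z)\le\rho(x,w)+\rho(w,z)$. Time reversal $s\mapsto y(\delta-s)$, $\alpha\mapsto-\alpha(\delta-\cdot)$ gives symmetry. Suppose \eqref{cnd:u_cont} failed on $\Sigma\cap K$ for some compact $K$. Then there would be $x^k,z^k\in\Sigma\cap K$ with $|x^k-z^k|\to0$ and $\rho(x^k,z^k)\ge\varepsilon$. Extract $x^k,z^k\to x^*\in\Sigma$. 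Reachability at $x^*$ (Lemma~\ref{lemma1gen}) together with symmetry gives $\rho(x^k,z^k)\le\rho(x^k,x^*)+\rho(x^*,z^k)\to0$, a contradiction. So pointwise reachability at every point already yields the uniform condition on compact sets. Theorem~\ref{thm:u_cont2}, in its local form, then applies.

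Your fallback is correct and uses this same idea directly. Take $(x_n,t_n)\to(x,t)$ with $t<T$. Step~1 of Theorem~\ref{thm:u_cont2} with $(x_1,t_1)=(x_n,t_n)$, $(x_2,t_2)=(x,t)$ and the reachability trajectories at $x$ gives $\limsup_n u(x_n,t_n)\le u(x,t)$. For the reverse inequality you need trajectories from $x$ to $x_n$ with vanishing time and $L^2$ norm. You get these by time-reversing the same reachability trajectories, and you should say so explicitly. You also need the uniform $L^2$ bound on optimal controls from $(x_n,t_n)$, and uniform continuity of $\ell$ on a compact set containing those trajectories. The case $t=T$ follows from Lemma~\ref{lem:cnc_T} and continuity of $g$.

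The paper instead just asserts that the estimates in the proof of Lemma~\ref{lemma1gen} are uniform and invokes Theorem~\ref{thm:u_cont2}. Your worry about the base-point dependence of $R$, $C$ and of $|x_{01}|^{-\nu}$ is exactly what that one-line justification does not address. The compactness argument above is what supports the paper's claim, at least on compact sets. The paper's route, once uniformity is in hand, gives a quantitative modulus of continuity for $u$. Your fallback gives only qualitative joint continuity, but it needs nothing beyond pointwise reachability and the reversibility of the Grushin dynamics.
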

\begin{proof}
The arguments in the proof of Lemma~\ref{lemma1gen} ensure that the estimates for $\delta_{n}$ and $\alpha_n$ are uniform; hence, the uniform reachability property~\eqref{cnd:u_cont} holds true. By Theorem~\ref{thm:u_cont2} we conclude. 
\end{proof}

\section{Examples and counter-examples}\label{sect:ex}
In this section, we collect several examples of control problems where the closed graph property and/or the reachability property hold or do not hold.

\begin{example}
By Lemma~\ref{lemma1gen}, a first case where the control problem has the reachability property is when 
$\Sigma=[a_1, b_1] \times [a_2, b_2]$.
\end{example}

\begin{example} \label{exh(x,y)}Consider $\Sigma$, $x_1$-convex and with a $C^k$ boundary with $k=\lfloor\nu +\frac{1}{2}\rfloor+1$.
For the sake of simplicity we take $\Sigma=\{x\in \R^2\,:\,\; f(x_1)-x_2\leq 0\}$ with $f\in C^k$ and $\bar x=(0,0)$.
We suppose that $f^{(j)}(0)=0$ for $j=0,\dots, (k-1)$.
Then, the assumptions of Lemma \ref{lemma1gen} (or more generally of Remark \ref{RK3.2}) are satisfied and 
the reachability property holds true in~$\bar x$.
\noindent For instance, for $\nu=1$, one can prove that the curve $x_2=C\, x_1^\rho$ with $\rho\geq 2$ is locally contained in $\Sigma$ for suitable~$C$ and $\rho$, showing that $g(x_1)=f(x_1)-C\, x_1^\rho\leq 0$ in a neighborhood of~$\bar x$.
\end{example}

In the following example, we provide a case where  the uniform reachability \eqref{cnd:u_cont} is fulfilled; in particular, by Corollary~\ref{corollary} and by Theorem~\ref{thm:u_cont2}, the closed graph property and the continuity of the value function hold true.

\begin{example}\label{ex:5.8}
Consider $\Sigma=\{(x_1, x_2)\in \re^2 : 0\leq x_1\leq 1, x_1^{\nu+1}\leq x_2\leq 1\}$. Let $z_1$ and $z_2$ be two points of $\Sigma$. We construct $(y,\alpha)\in \Gamma_{0,\delta}[z_1]$, such that $y(\delta)=z_2$ where $\delta$ is a modulus of continuity of $|z_2-z_1|$ 
and $\alpha$ is uniformly bounded independently of $z_1$ and $z_2$. This can be done by the concatenation of a trajectory, parallel to the $x_1$-axis (where $\alpha=(\pm 1,0)$) and a trajectory where $\alpha=(-1,-(\nu+1))$ (see Figure~\ref{figura:ex6.6}). By easy but long calculations, we get the relations in~\eqref{cnd:u_cont}.
\end{example}

\begin{figure}[h]
\centering
\begin{tikzpicture}[scale=3, >=latex]

\draw[->, thick] (-0.1,0) -- (1.2,0) node[right] {$x_1$};
\draw[->, thick] (0,-0.1) -- (0,1.2) node[above] {$x_2$};

\draw[very thick, blue] (0,1) -- (1,1); 
\draw[very thick, blue, domain=0:1, samples=200] plot (\x, {\x*\x}); 
\draw[very thick, blue] (0,0) -- (0,1); 
\node at (1.4,0.2) {\small $\Sigma= \{0 \le x_1 \le 1,\; x_1^2 \le x_2 \le 1\}$};

\fill[pattern=north east lines, pattern color=blue!40, domain=0:1, variable=\x]
  (0,1)
  -- plot ({\x}, {\x*\x})
  -- (1,1)
  -- cycle;
\filldraw[magenta] (1/3,1) circle (0.015) node[above left] {$z_1$};
\filldraw[magenta] (1/2,1) circle (0.015) node[above right] {$z_2$};
\draw[magenta, very thick]
  (1/3,1) -- (1/2,1);

\filldraw[magenta] (0,0.6) circle (0.015) node[left] {$z_1$};
\filldraw[magenta] (0,0.5) circle (0.015) node[left] {$z_2$};

\draw[magenta, very thick] (0,0.6) -- (0.316,0.6); 
\draw[magenta, very thick, domain=0.316:0, samples=200]
plot (\x, {\x*\x + 0.5});

\filldraw[magenta] (0.25,0.6667) circle (0.015) node[above left] {$z_1$};
\filldraw[magenta] (0,0) circle (0.015) node[below right] {$z_2$};

\draw[magenta, very thick] (0.25,0.6667) -- (0.816,0.6667); 
\draw[magenta, very thick, domain=0:0.816, samples=200]
  plot (\x, {\x*\x});
\end{tikzpicture}
\caption{Example \ref{ex:5.8}}\label{figura:ex6.6}
\end{figure}

In the following example we show a case where: $(1)$ each point in $\Sigma$ can be joined from any other point by an admissible trajectory, $(2)$ the reachability property does not hold, $(3)$ the closed graph property holds.

\begin{example}\label{example1}
Fix $0<m_1<m_2$ and consider the set $\Sigma=\{(x_1,x_2)\in \R^2\;:\; x_1\geq 0,\,m_1 x_1\leq x_2\leq m_2 x_1\}$ with Grushin dynamics and $\nu=1$.\\
$(1)$. We claim that any point $x\in\Sigma$ can be joined from any other point $x_0=(x_{01},x_{02})\in \Sigma$ following an admissible trajectory. Indeed, consider $x=0$ and any $x_0\in\Sigma$, $x\ne x_0$; we shall omit the cases where~$x\ne0$ because they are simpler. The trajectory $(y,\alpha)$, with $y(0)=x_0$ and $\alpha(s)=(-1, -\frac{x_{02}}{x_{01}(x_{01}-s)})$, fulfills dynamics~\eqref{grugen} with $y(x_{01})=0$ and its support is the straight segment from $x_{0}$ to $0$.\\
$(2)$. Now, we want to establish that the point $x=0$ is unreachable, namely, we want to prove that, for any $x_0\in\Sigma$, any admissible trajectory $(y,\alpha)$ with $y(0)=x_0$ and $y(T)=0$ has infinite cost.
Indeed, for $x_0=(x_{01},x_{02})\in\Sigma$ fixed, consider any admissible trajectory $(y,\alpha)$ with $y(0)=x_0$ and $y(T)=0$. Since $y(s)\in\Sigma$ for any $s\in[0,T]$, then there holds:
\begin{equation*}
0\leq m_1 y_1(s)\leq y_2(s)\leq m_2 y_1(s)\qquad\forall s\in[0,T];
\end{equation*}
in particular, we deduce $y_2'(s)\geq -\frac{|\alpha_2(s)|}{m_1}y_2(s)$ and, by Gronwall Lemma, also
\begin{equation*}
y_2(s)\geq x_{02}\exp\left\{-\frac{1}{m_1}\int_0^s |\alpha_2(\tau)|\, d\tau \right\}.
\end{equation*}
Then, the condition $y_2(T)=0$ implies $\int_0^T |\alpha_2(\tau)|\, d\tau =\infty$ and, in particular, that $(y,\alpha)$ has infinite cost.\\
$(3)$. Let us now prove that the closed graph property holds. For any $\Sigma\ni x\ne 0$, the reachability property is obvious so, from Corollary~\ref{corollary}, we get the result. For $x=0$, the closed graph property stems from point~$(2)$ and Proposition~\ref{prp:prop1}-$(ii)$.
\end{example}
\begin{figure}\centering
{\includegraphics[scale=.15]{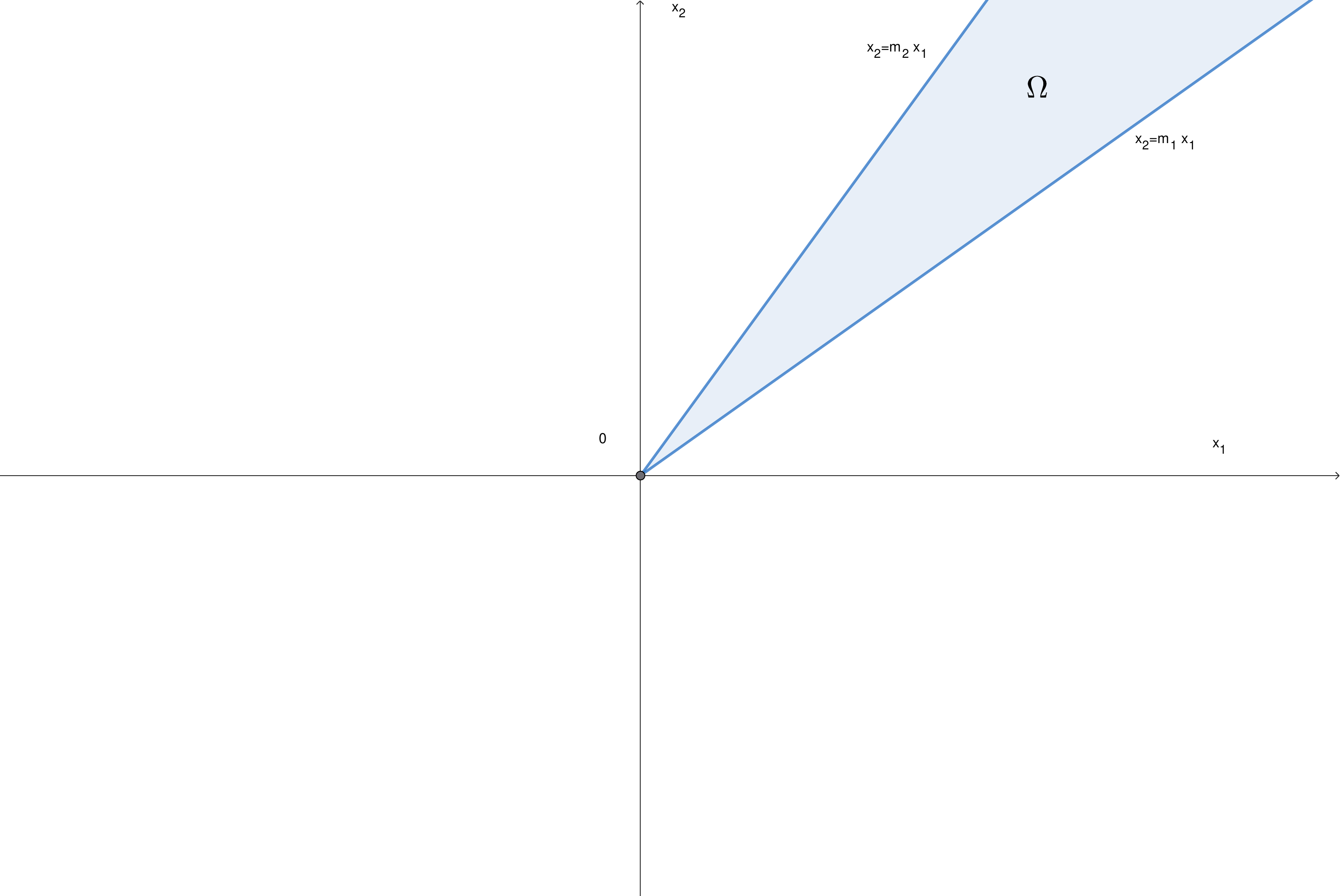}}\caption{Example \ref{example1}}
\end{figure}

In the following example, the closed graph property holds for $x=0$ both in the case where the reachability property holds and in the case where the reachability property does not hold.

\begin{example}\label{example6.2}
Consider the case where $\Sigma$ coincides with a regular curve: $\Sigma=\{(x_1,x_2)\in\R^2\;:\; x_2=\gamma(x_1)\;,\; x_1\in [0,R]\}$, for some $R>0$, where $\gamma$ is a regular function with $\gamma(0)=0$ (namely, $0\in \Sigma$). We want to prove that the closed graph property holds for~$x=0$.\\ Consider any point $0\ne x_0=(x_{01}, x_{02})\in\Sigma$. We have two possibilities: either there exists $(y,\alpha)\in \Gamma[x_0]$ and $y(T)=0$ or such a trajectory does not exist. We study these two cases separately.\\
{\it Case $1$: $\exists (y,\alpha)\in \Gamma[x_0]$ with $y(T)=0$.} Since $\Sigma$ coincides with a curve, the support of the trajectory~$y$ must contain the portion of~$\Sigma$ with $0\leq x_1\leq x_{01}$. 
Now, if we prove that the reachability property for $x=0$ holds true, then, the closed graph property follows by Corollary~\ref{corollary}.\\
In order to prove the reachability property, since the trajectory~$y$ must pass on every point $\bar x=(\bar x_{1},\gamma(\bar x_{1}))$ with $0\leq \bar x_{1}\leq x_{01}$, we define $\bar s\in(0,T)$ as the last instant when $y(\cdot)$ coincides with $\bar x$. \\
Consider a sequence $\{\bar x^n\}$, $\bar x^n=(\bar x_{1}^n,\gamma(\bar x_{1}^n))$, with $\bar x^n\in \Sigma$ and $\bar x_{1}^n\to 0$ as $n\to \infty$. For each $n\in\N$, let $\bar s^n$ be the last time when the trajectory $y(\cdot)$ passes in $\bar x^n$. Since $y\in C^{1/2}([0,T])$, then $\lim_{n\to\infty}\bar s^n=T$.
The family of trajectories $(y_n,\alpha_n)$ with $y_n(0)=\bar x^n$ and $\alpha_n(s)= \alpha (s+\bar s^n)$ fulfill the requirements for the reachability property with $\delta_n=T-\bar s^n$.

{\it Case $2$: $\nexists (y,\alpha)\in \Gamma[x_0]$ with finite cost and $y(T)=0$.} The closed graph property is a straightforward consequence of Proposition~\ref{prp:prop1}-$(ii)$.
\end{example}

In the following example we find a set where neither the reachability property nor the closed graph property  hold for $x=0$.
\begin{example}\label{ex5.6}
Denote $\Sigma_1$ the set defined in Example~\ref{example1}. Consider the set $\Sigma=\Sigma_1 \cup \{(x_1, x_2)\in \re^2 : x_2\leq 0\}$ (see figure~\ref{fig5}), a sequence of points $\{x_n\}_{n\in \N}$, with $x_n\in\Sigma_1$ and $x_n\to 0$, and a sequence of optimal trajectories $(y_n,\alpha_n)\in\Gamma^{\rm{opt}}[x_n]$ with $y_n$ uniformly convergent to some~$y$.
As in Example~\ref{example1}, the trajectory~$(y,\alpha)$ fulfills $y(\cdot)=0$ and $\alpha(\cdot)=0$ on~$[0,T]$. Here, the trajectory $(0,0)$ may not be optimal for $x=0$, i.e. the closed graph property does not hold. Indeed, an optimal trajectory starting from $x=0$ can move in
the set $\{(x_1, x_2)\in \re^2 : x_2\leq 0\}$ if, for instance, the running cost~$\ell$ is strictly decreasing with respect to $x_1$ and $T$ is large enough.\\
Note that the set $\Sigma$ is the closure of an open set and that, in the controllable case (namely, $\nu=0$), it  enjoys the reachability property and, consequently, the closed graph property.
\end{example}

\begin{figure}[h]\label{fig5}
\centering
\begin{tikzpicture}[>=Latex,scale=0.4]

  \draw[->,thick] (-3,0)--(5,0) node[right] {$x_{1}$};
  \draw[->,thick] (0,-3)--(0,5) node[above] {$x_{2}$};

  \fill[pattern=north east lines, pattern color=cyan!60]
    (-3,-3) rectangle (5,0);

  \draw[thick] (0,0)--(4,2);
  \draw[thick] (0,0)--(2,4);

  \fill[pattern=north east lines, pattern color=cyan!60]
    (0,0)--(4,2)--(2,4)--cycle;

  \node at (3,1.2) {$\Sigma$};
\end{tikzpicture}
\caption{Example \ref{ex5.6}}
\end{figure}

\begin{remark}
We refer the reader to \cite[Example IV.5.3]{BCD} for an example with different, very degenerate, dynamics where it is not true that any point can be reached from any other point by an admissible trajectory and where the closed graph property does not hold. It is worth noting that, if in this example the dynamics are replaced by Grushin dynamics, then the closed graph property holds true.
\end{remark}

\begin{remark}\label{rem_char}
Let $\Sigma$ be the closure of a regular open set i.e. : $\Sigma=\{x\in \R^2\,:\, h(x_1,x_2)\leq 0\}$ with $h\in C^2$, $\nabla h(x_1,x_2)\ne 0$ and $\Sigma$ $x_1-$convex. \\
Let us first consider the standard Grushin dynamics with $\nu=1$. From Lemma \ref{lemma1gen} and Remark \ref{RK3.2}, the reachability property holds true.\\ Indeed, let us focus on the so-called {\it characteristic points} of $\partial\Sigma$ which are the points where the  Grushin horizontal gradient of $h$ vanishes (that is $<X_i(x_0),\nabla h(x_0)>=0$ for $i=1,2$). As one can easily check, the characteristic points are the points $x_0=(0,x_{02})$ with $\partial_{x_1}h(x_0)=0$ and, at these points, $\partial\Sigma$ is locally a graph of a $x_1-$function $f$ with $f^\prime(0)=0$. Thus, the reachability property follows as in Example~$5.2$. \\
Observe that at these points~$x_0$, the normal vector $n(x_0)$ to $\partial\Sigma$ is in the $x_2-$direction so that for any choice of the control $\alpha$, the standard Grushin dynamics are orthogonal to  $n(x_0)$. Hence,
the classical inward/outward pointing condition fails (see e.g \cite{BFZ} and the references therein).
\\
More generally, if $\nu>1$, in line with Example \ref{exh(x,y)}, in order to obtain the reachability property for $\Sigma$, we assume that $\Sigma$ has $C^k$ boundary with $k=\lfloor\nu +\frac{1}{2}\rfloor+1$ and that, at the characteristic points, the function $f$ which locally defines the boundary of $\Sigma$ near $x_0=(0,x_{02})$ satisfies  $f^{(j)}(0)=0$ for $j=0,\dots, (k-1)$.
\end{remark}

\section{A class of Mean Field Games}\label{sect:MFGequil}

We are now ready to tackle Mean Field Games where the running  and terminal costs depend on the distribution of states. Following the arguments of~\cite{24SIAM,CC} and using the results contained in previous sections, we prove the existence of a mild solution.

\subsection{Setting and notations}\label{subsec:setting}

\paragraph{Probability sets .} Let $\cP(\Sigma)$ denote the set of Borel probability measures on $\Sigma$ endowed with the narrow topology. Similarly, $\cP(\Gamma)$ stands for the set of Borel probability measures on $\Gamma$.  Let $\cP_1(\Sigma)$ be the set of measures in $\cP(\Sigma)$ with finite first moment.

\paragraph{Assumptions.} 
In the rest of this section we shall assume the following hypotheses.
\begin{itemize}
\item[(i)] The running cost is a map $L\in C(\cP(\Sigma);C_{b}(\Sigma\times [0,T]))$ while the terminal cost is a map $G\in C(\cP(\Sigma); C_{b}(\Sigma))$.
The images of $m\in \cP(\Sigma)$ by $L$, respectively by $G$, are denoted by $L[m](\cdot,\cdot)$, respectively $G[m](\cdot)$.
Furthermore, there exists  $K$ defined by
\begin{equation}\label{eq:37}  
K=\max\left(\sup_{m\in \cP(\Sigma)} \|L[m]\|_{L^\infty} , \sup_{m\in \cP(\Sigma)} \|G[m]\|_{L^\infty}\right)\in\R^+.
\end{equation}
\item[(ii)] The initial measure~$m_0\in \cP(\Sigma)$ has compact support.
\item[(iii)] 
For each point $x\in\Sigma$, one of the two following assumptions holds true
\begin{itemize}
\item[($a$)] the Grushin type dynamics~\eqref{grugen} fulfill \eqref{eq:lemma1}-($i$) and -($ii$)
\item[($b$)] for any $\bar x\in \Sigma\setminus\{x\}$, there is no admissible trajectory $(y,\alpha)\in W^{1,2}([0,T])\times L^2(0,T)$ with $y(0)=\bar x$ and $y(T)=x$.
\end{itemize}
\end{itemize}
\paragraph{The set $\Gamma_C$.}
Let us introduce for $C>0$
\begin{eqnarray*}
\Gamma_C[x]&=&\left\{y\in W^{1,2}([0,T]):\exists \alpha\in L^2(0,T)\textrm{ s.t. } (y,\alpha)\textrm{ is adm. for $x$},\,\|y\|_\infty\leq C,\  \| \alpha \|_2\leq C\right\},
\\\Gamma_C&=&\bigcup_{x\in\Sigma}\Gamma_C[x].
\end{eqnarray*}
The set~$\Gamma_C$, endowed with the topology of uniform convergence, is compact (see  \cite[Lemma 3.1]{24SIAM}).

\paragraph{The set $\cP(\Gamma_C)$ and the associated costs.}
Let $\cP(\Gamma_C)$ denote the set of probability measures on~$\Gamma_C$ endowed with the narrow topology.
For any $\mu\in \cP(\Gamma_C)$ and  $t\in [0,T]$,  $m^\mu(t)\in \cP( \Sigma)$ is defined by $m^\mu(t)=e_t\sharp \mu$,  where  $e_t:\Gamma_C\to \Sigma$ is  the evaluation map  defined by $e_t(y)=y(t)$. For any $\mu\in \cP(\Gamma_C)$, $\textrm{supp}(m^\mu(t))\subset\{x\in\Sigma\;:\;|x|\leq C\}$, and the map $t\mapsto m^\mu(t)$ belongs to $C^{1/2}([0,T];\cP_1(\Sigma))$ (see \cite[Lemma 3.2]{22SIAM}). Hence, for all $y\in W^{1,2}([0,T])$, the function~$t\mapsto L[m^\mu(t)](y(t),t)$ is continuous and bounded by the constant $K$ in (\ref{eq:37}).
\\
With any $\mu\in \cP(\Gamma_C)$ and any admissible trajectory $(y,\alpha)\in W^{1,2}([0,T])\times L^2(0,T)$, we associate the cost
\begin{equation}\label{costMFG}
J^\mu(x;(y,\alpha))
=\int_0^T \left(L[m^\mu(\tau)] (y(\tau),\tau)+\frac{|\alpha(\tau)|^2}{2} \right)d\tau+  G[ m^\mu(T)](y(T)).
\end{equation}
\begin{remark}
By the arguments of Remark~\ref{rmk:2.1}, assumption~$(iii)-(a)$ implies that the control problem \eqref{grugen}-\eqref{costMFG} with any $\mu\in \cP(\Gamma_C)$, fulfill the approximation property in~$x$. On the other hand, by the arguments of Remark~\ref{rmk:unreach_ell}, assumption~$(iii)-(b)$ implies that the point $x$ is unreachable for the control problem \eqref{grugen}-\eqref{costMFG} with any $\mu\in \cP(\Gamma_C)$. 
\end{remark}

\paragraph{Optimal trajectories.} Fix $\mu\in \cP(\Gamma_C)$. For any $x\in \Sigma$, let us set
\begin{equation}\label{eq:42}
\Gamma^{\mu,{\rm opt}}[x]=\left\{(y,\alpha)\in \Gamma[x]\;:\; J^\mu(x;(y,\alpha))=\min_{ (\widetilde y,\widetilde \alpha) \in \Gamma^\mu[x]}  J^\mu(x; (\widetilde y,\widetilde \alpha) \right\}
\end{equation}
where $J^\mu $ is defined in (\ref{costMFG}) and $\Gamma^\mu[x]$ is the set of admissible trajectories starting from~$x$ with finite cost~$J^\mu$.
Remark~\ref{prp:ex_OT} entails that for each $\mu\in \cP(\Gamma_C)$ and $x\in \Sigma$, the set $\Gamma^{\mu,{\rm opt}}[x]$  is not empty. We set $\Gamma^{\mu,{\rm opt}}=\cup_{x\in\Sigma} \Gamma^{\mu,{\rm opt}}[x]$.

\begin{remark}\label{rmk:OT_unif}
From assumption~\eqref{eq:37}, there exists a positive constant~$\tilde C$ such that, for every~$\mu\in \cP(\Gamma_C)$, $x\in\Sigma$ and $(y,\alpha)\in \Gamma^{\mu,{\rm opt}}[x]$, there holds $\|\alpha\|_2\leq \tilde C$.
In particular, since $m_0$ has compact support, then for every $\mu\in \cP(\Gamma_C)$, $x\in \textrm{supp}(m_0)$ and $(y,\alpha)\in \Gamma^{\mu,{\rm opt}}[x]$, there holds $y\in\Gamma_{\tilde C}[x]$ (possibly after taking a larger value of $\tilde C$).
\end{remark}
\paragraph{The sets  $\cP_{m_0}(\Gamma_C)$.}
Let $\cP_{m_0}(\Gamma_C)$ denote the set of measures $\mu\in\cP(\Gamma_C)$ such that $e_0\sharp \mu=m_0$.
For $C$ sufficiently large, $\cP_{m_0}(\Gamma_C)$ is not empty (see \cite[Lemma~3.3]{24SIAM}).

\subsection{Relaxed equilibria}
Fix $\mu\in \cP_{m_0}(\Gamma_C)$ and $x\in \Sigma$, for $J^\mu$ defined in (\ref{costMFG}), let us set
\begin{equation}\label{eq:42new}
\GammamuoptC[x]=\left\{y\in \Gamma_C[x]\;:\;\exists \alpha\in L^2(0,T)\textrm{ s.t. } (y,\alpha)\in\Gamma^{\mu,{\rm opt}}[x]\right\}.
\end{equation}

\begin{definition}
\label{sec:setting-notation-3}
The complete probability measure $\mu \in \cP_{m_0}(\Gamma_C)$ is a relaxed equilibrium associated with the initial distribution $m_0$ if 
\begin{equation*}\label{eq:43}
{\rm{supp}}(\mu)\subset \mathop \bigcup_{
x\in {\rm{supp}}(m_0)}\GammamuoptC[x].
\end{equation*}
\end{definition}
We can now state our main result:
\begin{theorem}\label{sec:setting-notation-6}
Under the assumptions stated in Section~\ref{subsec:setting}, for $C$ large enough, there exists a relaxed equilibrium $\mu\in \cP_{m_0}(\Gamma_C)$.
\end{theorem}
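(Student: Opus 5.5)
We will follow the Kakutani fixed point strategy of~\cite{CC,24SIAM}. Fix $C\geq\tilde C$, where $\tilde C$ is the constant of Remark~\ref{rmk:OT_unif}, and take $C$ large enough that $\cP_{m_0}(\Gamma_C)$ is nonempty. The set $\cP_{m_0}(\Gamma_C)$ is convex. Since $\Gamma_C$ is a compact metric space, Prokhorov's theorem makes $\cP(\Gamma_C)$ compact for the narrow topology. Moreover, $\cP_{m_0}(\Gamma_C)$ is a closed subset, because $e_0$ is continuous and hence $\mu\mapsto e_0\sharp\mu$ is narrowly continuous. We regard $\cP(\Gamma_C)$ as a compact convex subset of the locally convex space of finite signed measures on $\Gamma_C$ with the weak-$*$ topology, so the Kakutani--Fan--Glicksberg theorem applies.

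We define the multivalued map $E:\cP_{m_0}(\Gamma_C)\rightrightarrows\cP_{m_0}(\Gamma_C)$ by
\[
E(\mu)=\Big\{\hat\mu\in\cP_{m_0}(\Gamma_C)\;:\;{\rm supp}(\hat\mu)\subset\bigcup_{x\in{\rm supp}(m_0)}\GammamuoptC[x]\Big\}.
\]
A fixed point $\mu\in E(\mu)$ is exactly a relaxed equilibrium. The values of $E$ are clearly convex. Nonemptiness rests on the closed graph property for the cost $J^\mu$, which we obtain as follows.
\begin{itemize}
\item[$\bullet$] For every $x$, assumption~$(iii)$ together with the Remark following~\eqref{costMFG} gives either the approximation property or unreachability. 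Proposition~\ref{prp:prop1} then yields the closed graph property.
\item[$\bullet$] Combined with Remark~\ref{rmk:OT_unif}, this shows that the map $x\mapsto\GammamuoptC[x]$ has nonempty compact values and a closed graph on ${\rm supp}(m_0)$.
\item[$\bullet$] A measurable selection theorem then gives a Borel map $x\mapsto\gamma_x\in\GammamuoptC[x]$.
\end{itemize}
Setting $\hat\mu=\gamma_\cdot\sharp m_0$ gives an element of $E(\mu)$. Each $E(\mu)$ is closed, since the support condition means that $\hat\mu$ is concentrated on a set which is closed by the same closed graph property.

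The main step is to show that $E$ has closed graph. Take $\mu_k\to\mu$, $\hat\mu_k\in E(\mu_k)$ and $\hat\mu_k\to\hat\mu$ narrowly, and let $y\in{\rm supp}(\hat\mu)$.
\begin{itemize}
\item[$\bullet$] By narrow convergence there exist $y_k\in{\rm supp}(\hat\mu_k)$ with $y_k\to y$ uniformly. Then $y_k\in\Gamma^{\mu_k,{\rm opt}}_C[x_k]$ with $x_k=y_k(0)\in{\rm supp}(m_0)$ and $x_k\to x=y(0)$.
\item[$\bullet$] It remains to prove $y\in\Gamma^{\mu,{\rm opt}}_C[x]$, which is a closed graph property in which the cost varies as well. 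Since $m^{\mu_k}(t)\to m^\mu(t)$ uniformly in $t$ (equicontinuity in $C^{1/2}([0,T];\cP_1)$), the continuity of $L$ and $G$ gives $L[m^{\mu_k}(\cdot)]\to L[m^\mu(\cdot)]$ and $G[m^{\mu_k}(T)]\to G[m^\mu(T)]$ uniformly on the compact set $\{|x|\leq C\}\cap\Sigma$.
\item[$\bullet$] We then rerun the proof of Proposition~\ref{prp:prop1} with $J$ replaced by $J^{\mu_k}$. Extract $\alpha_k\rightharpoonup\alpha$, and fix a competitor $\hat y$ for $x$ with finite cost.
\item[$\bullet$] In case $(a)$, we approximate $\hat y$ by trajectories $\hat y_k$ from $x_k$ using \eqref{eq:lemma1}-$(i)$-$(ii)$. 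These conditions do not involve the cost, and by uniform convergence $J^{\mu_k}(x_k;\hat y_k)\to J^\mu(x;\hat y)$. Lower semicontinuity, again combined with uniform convergence of the costs, gives $\liminf_k J^{\mu_k}(x_k;y_k)\geq J^\mu(x;y)$.
\item[$\bullet$] In case $(b)$, the argument of Case~2 applies unchanged, since unreachability is independent of $\mu$.
\end{itemize}
This yields optimality of $y$ and hence ${\rm supp}(\hat\mu)\subset\bigcup_{x\in{\rm supp}(m_0)}\GammamuoptC[x]$.

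Kakutani's theorem then provides a fixed point, which is the desired relaxed equilibrium. I expect the closed graph step with varying $\mu$ to be the delicate point. The other subtle step is the measurable selection, which needs the closedness of the graph of $x\mapsto\GammamuoptC[x]$ on the compact set ${\rm supp}(m_0)$.
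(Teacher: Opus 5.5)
Your proposal is correct and follows essentially the paper's route: Kakutani on $\cP_{m_0}(\Gamma_C)$, nonemptiness of $E(\mu)$ via a Borel selection of $x\mapsto\GammamuoptC[x]$, and closedness of the graph of $E$ via the lower Kuratowski limit of supports combined with the varying-$\mu$ closed graph property, which is exactly Lemma~\ref{prop2}; your support-based definition of $E(\mu)$ is equivalent to the paper's disintegration-based one because $\bigcup_{x\in{\rm supp}(m_0)}\GammamuoptC[x]$ is closed. One small fix: a competitor $\hat y$ need not stay in $\{|x|\le C\}$, so you should take the uniform convergence of $L[m^{\mu_k}(\cdot)]$ and $G[m^{\mu_k}(T)]$ on a compact set containing the ranges of all the $\hat y_k$, which exists because $\hat y_k\to\hat y$ uniformly.
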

\noindent In order to prove Theorem~\ref{sec:setting-notation-6}, we need a closed graph property. The proof of Theorem~\ref{sec:setting-notation-6} is postponed at the end of this section.
 \begin{lemma}\label{prop2} Consider $C\geq \tilde C$ where $\tilde C$ is the constant introduced in Remark~\ref{rmk:OT_unif} and such that $\cP_{m_0}(\Gamma_C)\not= \emptyset$. Fix $\mu\in\cP_{m_0}(\Gamma_C)$ and $x\in\textrm{supp}(m_0)$. Let $(\mu_n)_{n\in\N}$ be a sequence of probability measures with $\mu_n\in\cP_{m_0}(\Gamma_C)$, narrowly convergent to~$\mu$ as $n\to\infty$ and a sequence of points $(x_n)_{n\in\N}$, with $x_n\in\Sigma$ and $x_n\to x$ as $n\to\infty$. 
Let~$(y_n)_{n\in\N}$ be  such that $y_n\in \Gamma_C^{\mu_n,{\rm opt}}[x_n]$ and $y_n$  converge uniformly to some $y$ as $n\to\infty$. 
Then, $y\in \GammamuoptC[x]$.
\end{lemma}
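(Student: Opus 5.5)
The argument follows the proof of Proposition~\ref{prp:prop1}. The new ingredient is that the cost $J^{\mu_n}$ now changes with $n$ through the measures $m^{\mu_n}(t)$. We split according to which alternative of assumption~$(iii)$ holds at~$x$.

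\emph{Step 1: compactness and admissibility of the limit.} For each $n$ pick $\alpha_n$ with $(y_n,\alpha_n)\in\Gamma^{\mu_n,{\rm opt}}[x_n]$. By Remark~\ref{rmk:OT_unif}, $\|\alpha_n\|_2\le\tilde C\le C$. Up to a subsequence, $\alpha_n\rightharpoonup\alpha$ weakly in $L^2(0,T)$. Passing to the limit in the integral form of \eqref{grugen}, using the uniform convergence of $y_n$ and the continuity of $|y_{1}|^\nu$, shows that $(y,\alpha)$ is admissible for~$x$. Since $\Sigma$ is closed, $y$ stays in $\Sigma$. By lower semicontinuity of the norm, $\|y\|_\infty\le C$ and $\|\alpha\|_2\le C$, so $y\in\Gamma_C[x]$.

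\emph{Step 2: convergence of the costs along the sequence.} Narrow convergence $\mu_n\to\mu$ gives $m^{\mu_n}(t)=e_t\sharp\mu_n\to m^\mu(t)$ narrowly for each $t$, because $e_t$ is continuous on $\Gamma_C$. The continuity assumptions on $L$ and $G$ then give $L[m^{\mu_n}(t)]\to L[m^\mu(t)]$ in $C_b(\Sigma\times[0,T])$ and $G[m^{\mu_n}(T)]\to G[m^\mu(T)]$ in $C_b(\Sigma)$. The $L$-terms are bounded by $K$, so dominated convergence applies. Combined with uniform convergence of trajectories, this gives: for any sequence $z_n\to z$ uniformly and any $\beta_n$ with $\|\beta_n\|_2$ converging or weakly convergent, the $L$ and $G$ parts of $J^{\mu_n}(\cdot;(z_n,\beta_n))$ converge to those of $J^\mu(\cdot;(z,\beta))$. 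Weak lower semicontinuity of $\|\cdot\|_2^2$ then yields
\[
\liminf_n J^{\mu_n}(x_n;(y_n,\alpha_n))\ge J^\mu(x;(y,\alpha)).
\]

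\emph{Step 3: optimality.} \emph{Case $(iii)$-$(a)$.} Take any $(\hat y,\hat\alpha)\in\Gamma^\mu[x]$. Assumption $(iii)$-$(a)$ provides $(\hat y_n,\hat\alpha_n)\in\Gamma[x_n]$ with $\hat y_n\to\hat y$ uniformly and $\|\hat\alpha_n\|_2^2\to\|\hat\alpha\|_2^2$. By Step 2, $J^{\mu_n}(x_n;(\hat y_n,\hat\alpha_n))\to J^\mu(x;(\hat y,\hat\alpha))$. Optimality of $(y_n,\alpha_n)$ gives $J^{\mu_n}(x_n;(y_n,\alpha_n))\le J^{\mu_n}(x_n;(\hat y_n,\hat\alpha_n))$. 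Taking $\liminf$ on the left and $\lim$ on the right, we get $J^\mu(x;(y,\alpha))\le J^\mu(x;(\hat y,\hat\alpha))$, so $(y,\alpha)\in\Gamma^{\mu,{\rm opt}}[x]$.

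\emph{Case $(iii)$-$(b)$.} As in Case 2 of Proposition~\ref{prp:prop1}, reversing time shows that every trajectory in $\Gamma[x]$ is constant. Hence $\Gamma^\mu[x]=\{(x,0)\}$, which is therefore optimal. The limit $(y,\alpha)$ has $y(0)=x$ and finite cost by Step 1, so it coincides with $(x,0)$.

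In both cases $y\in\Gamma_C[x]$ and $(y,\alpha)$ is optimal, so $y\in\GammamuoptC[x]$.

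The main obstacle is Step 2, namely convergence of the coupling terms when both the measure and the trajectory vary. I would handle it through the uniform-in-space convergence given by $L\in C(\cP(\Sigma);C_b)$ together with equicontinuity of the trajectories.
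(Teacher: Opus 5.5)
Your proposal is correct and follows the paper's proof essentially step by step. Weak $L^2$ compactness of the controls gives $y\in\Gamma_C[x]$, and optimality is then obtained by splitting into two cases: case (iii)-($a$) uses the approximating sequence, lower semicontinuity on the left and convergence of the coupling terms on the right, and in case (iii)-($b$) unreachability forces the limit curve to be constant.

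The only real difference is that you spell out the convergence $J^{\mu_n}(x_n;(\hat y_n,\hat\alpha_n))\to J^\mu(x;(\hat y,\hat\alpha))$ via $L\in C(\cP(\Sigma);C_b(\Sigma\times[0,T]))$ and dominated convergence, which the paper delegates to \cite[Lemma 3.4]{CC}. One small imprecision, which the paper shares: in case (iii)-($b$), if $x$ lies on the $x_2$-axis, the constant curve also admits controls $(0,\alpha_2)$ with $\alpha_2\neq 0$. So what you should assert is that $y\equiv x$ and that $(y,0)$ is optimal, not that $(y,\alpha)=(x,0)$; this is all the statement requires.
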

\begin{proof} The arguments of this proof are similar to those of~\cite[Lemma 3.4]{CC}; hence, we shall only sketch them.\\
Fix $\alpha_n\in L^2(0,T)$ such that $\|\alpha_n\|_{2} \le C$ and $(y_n,\alpha_n)\in\Gamma^{\mu,{\rm opt}}[x_n]$.
Up to the extraction of a subsequence, we can assume that $\alpha_n$ converges weakly in $L^2(0,T)$ to some~$\alpha$ with $\|\alpha\|_{2} \le C$. Since $y_n$ converges uniformly to $y$, this implies that $y\in W^{1,2}(0,T)$ with $(y,\alpha)$ fulfilling~\eqref{grugen} and $y(\cdot)\in \Sigma$. We have proved that  $ y\in\Gamma_C[x]$.\\
Now we wish to prove that $y$ is optimal for $J^\mu$, namely
\begin{equation}\label{eq:pre34}
J^\mu(x;(y,\alpha))\leq J^\mu(x;(\hat y,\hat \alpha))\qquad \forall (\hat y,\hat \alpha) \in\Gamma^\mu[x]
\end{equation}
where $\Gamma^\mu$ is defined in~\eqref{eq:42}. We split our arguments according to which part of assumption~$(iii)$ is in force.\\
If assumption~$(iii)$-($b$) holds true, then $(\bar y,\bar \alpha)=(x,0)$ is the unique admissible trajectory starting from~$x$ with finite cost; this implies $\Gamma^{\mu,{\rm opt}}[x]=\{(x,0)\}$. Since $(y,\alpha)\in W^{1,2}(0,T)\times L^{2}(0,T)$, the cost $J^\mu(x;(y,\alpha))$ is finite; hence the trajectory~$(y,\alpha)$ must coincide with~$(x,0)$.\\
Now consider that assumption~$(iii)$-($a$) holds true.
Fix any $\hat y\in\Gamma^\mu[x]$, then there exists a sequence $(\hat y_n,\hat \alpha_n)_{n\in\N}$ such that $(\hat y_n,\hat \alpha_n)\in W^{1,2}([0,T])\times L^2(0,T)$, and $
\hat y_n \to \hat y$ uniformly in $[0,T]$ as $n\to\infty$, $\|\hat \alpha_n\|_{2}\leq \|\hat \alpha\|_{2} +o_n(1)$, where $\lim_n o_n(1)=0$.
Since $y_n\in \Gamma_C^{\mu_n,{\rm opt}}[x_n]$, 
\begin{equation}\label{eq:34}
J^{\mu_n}(x_n;(y_n,\alpha_n))\leq J^{\mu_n}(x_n;(\hat y_n,\hat \alpha_n)).
\end{equation}
By the LSC of $J^\mu$ with respect to $\alpha$, we have 
\begin{equation*}
J^\mu(x;(y,\alpha))\leq \liminf_{n\to\infty}  J^{\mu_n}(x_n;(y_n,\alpha_n)).
\end{equation*}
Arguing as in~\cite[Proof of Lemma 3.4 (point $(b)$)]{CC}, we obtain 
\begin{equation*}
\limsup_{n}J^{\mu_n}(x_n;(\hat y_n,\hat \alpha_n))\leq J^{\mu}(x;(\hat y,\hat \alpha)).
\end{equation*}
In conclusion, replacing these inequalities in~\eqref{eq:34} yields
$
J^\mu(x;(y,\alpha))\leq J^\mu(x;(\hat y,\hat\alpha))$.
By the arbitrariness of $(\hat y,\hat \alpha)\in\Gamma^\mu[x]$, we accomplish the proof of relation~\eqref{eq:pre34}.
\end{proof}
\begin{Proofc}{Proof of Theorem~\ref{sec:setting-notation-6}}
This proof follows the same arguments of the proof of~\cite[Theorem 3.5]{24SIAM} and of~\cite[Theorem 3.1]{CC}: we shall only sketch it.\\
Let us first recall that Remark~\ref{prp:ex_OT} ensures that, for every $x\in \rm{supp} (m_0)$,
$\GammamuoptC[x]\ne\emptyset$, where $\GammamuoptC[x]$ is defined as in \eqref{eq:42new}. Moreover, the set~$\Gamma_C$ is compact and, by Prokhorov theorem \cite[Theorem 5.1.3]{AGS}, also $\cP(\Gamma_C)$ is compact.

We define the multivalued map 
$E:\cP_{m_0}(\Gamma_C)\rightrightarrows \cP_{m_0}(\Gamma_C)$
as follows:
\begin{equation*}\label{eq:mappa}
E(\mu)=\{\hat \mu\in\cP_{m_0}(\Gamma_C)\;:\; \textrm{supp}\,\hat \mu_x\subset \GammamuoptC[x]\quad m_0-\textrm{a.e. } x\in\Sigma\},
\end{equation*}
where $\{\hat \mu_x\}_{x\in\Sigma}$ is the family of Borel probability measures on $\cP_{m_0}(\Gamma_C)$ obtained by the disintegration theorem \cite[Theorem 5.1.3]{AGS}.
Following the arguments of~\cite[Theorem 3.5]{24SIAM}, we obtain
that, for every $\mu\in\cP_{m_0}(\Gamma_C)$, the set $E(\mu)$ is not empty and convex
and that the map~$E$ has the closed graph property. In the proof of the latter property, we use the Kuratowski theorem and we replace their~\cite[Proposition 3.10]{24SIAM} with our Lemma~\ref{prop2}.
In conclusion, the existence of relaxed equilibrium is obtained applying  Kakutani's theorem.
\end{Proofc}
\begin{remark}
By easy adaptations, our results hold also in the case $\nu=0$ (i.e. $\dot y=\alpha$, where dynamics are strong controllable). Following the Remark~\ref{rmk:k=0}, the existence of a relaxed equilibrium is obtained by assumptions weaker those of~\cite{CC} where $\Sigma$ is the closure of a bounded open set with $C^2$ boundary.
\end{remark}

\subsection{Mild solution}

Since Theorem~\ref{sec:setting-notation-6} ensures the existence of a relaxed equilibrium, we can now introduce the definition of {\it mild} solution and establish its existence as an easy consequence of Theorem~\ref{sec:setting-notation-6}.
\begin{definition}
Let $\mu\in\cP_{m_0}(\Gamma_C)$ be a relaxed equilibrium.
The pair $(u,m)$ is the associated mild solution if $u$ is the value function  associated with $\mu$ by \begin{equation}\label{eq:4.1}
  u(x,t)= \inf_{ y\in \Gamma^\mu_t[x]} 
  J^\mu_t(x; y ),
\end{equation}
where $J^\mu_t(x;y)=\int_t^T \left(L[m^\mu(\tau)] (y(\tau),\tau)+\frac{|\alpha(\tau)|^2}{2} \right)d\tau+ G[ m^\mu(T)](y(T))$, $\Gamma^\mu_t[x]$ is the set~$\Gamma_t[x]$ with cost~$J^\mu_t$ and 
$m\in C([0,T];\cP_1(\Sigma))$ is defined by 
$m(t)=e_t\#\mu$.  
\end{definition}
A straightforward consequence of Theorem~\ref{sec:setting-notation-6} and of Theorem~\ref{thm:u_cont2} is the following result:
\begin{corollary}
Under the assumptions of Theorem~\ref{sec:setting-notation-6} and of Theorem~\ref{thm:u_cont2}, there exists a mild solution~$(u,m)$ with $m\in C^{1/2}([0,T];\cP_1(\Sigma))$ and $u\in C(\Sigma\times [0,T])$.
\end{corollary}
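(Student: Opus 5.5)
The plan is to take the relaxed equilibrium $\mu\in\cP_{m_0}(\Gamma_C)$ supplied by Theorem~\ref{sec:setting-notation-6}, set $m(t)=e_t\sharp\mu$, and let $u$ be the value function defined in \eqref{eq:4.1}. The proof then has two independent parts: the regularity of $m$ and the continuity of $u$.

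\emph{Regularity of $m$.} This follows from the remark made when $\cP(\Gamma_C)$ was introduced (see \cite[Lemma 3.2]{22SIAM}). Every $y\in\Gamma_C$ satisfies
\[
|y(t)-y(s)|\le (1+C^\nu)\int_s^t|\alpha|\,d\tau\le (1+C^\nu)\,C\,|t-s|^{1/2},
\]
because $\|y\|_\infty\le C$ and $\|\alpha\|_2\le C$. Coupling $m(t)$ and $m(s)$ through $(e_t,e_s)\sharp\mu$ then gives
\[
d_1(m(t),m(s))\le \int_{\Gamma_C}|y(t)-y(s)|\,d\mu(y)\le C'|t-s|^{1/2}.
\]
Moreover $m(t)$ is supported in $\{|x|\le C\}$, so $m\in C^{1/2}([0,T];\cP_1(\Sigma))$.

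\emph{Continuity of $u$.} Here I will apply Theorem~\ref{thm:u_cont2} to the control problem with running cost $\ell(x,t):=L[m(t)](x,t)$ and terminal cost $g(x):=G[m(T)](x)$. The work is to check that this $\ell$ and $g$ satisfy the hypotheses used in the proof of that theorem. First, $g$ is continuous and bounded because $G[m(T)]\in C_b(\Sigma)$. Second, $\ell$ is bounded by $K$ in \eqref{eq:37}. Third, $\ell$ must be jointly continuous on $\Sigma\times[0,T]$ (the proof of Theorem~\ref{thm:u_cont2} uses a local modulus $\omega_\ell$). To show this, write
\[
|\ell(x,t)-\ell(x',t')|\le \|L[m(t)]-L[m(t')]\|_\infty+|L[m(t')](x,t)-L[m(t')](x',t')|.
\]
The first term tends to $0$ because $t\mapsto m(t)$ is narrowly continuous, by the previous step, and $L\in C(\cP(\Sigma);C_b(\Sigma\times[0,T]))$. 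For the second term, the set $\{m(t):t\in[0,T]\}$ is compact, so the family $\{L[m(t)]\}_t$ is compact in $C_b$. It is therefore equicontinuous on compact subsets of $\Sigma\times[0,T]$, which gives a uniform local modulus.

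The uniform reachability hypothesis \eqref{cnd:u_cont} concerns only the dynamics and $\Sigma$, not the costs, so it is assumed directly, exactly as in the statement. Theorem~\ref{thm:u_cont2} then gives $u\in C(\Sigma\times[0,T])$. The time-dependence of $\ell$ causes no difficulty, since Theorem~\ref{thm:u_cont2} is already formulated for $\ell(x,t)$.

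I expect the main (mild) obstacle to be the joint continuity of $\ell$, which rests on the equicontinuity argument via compactness of the curve $t\mapsto m(t)$. One further point needs checking: the proof of Theorem~\ref{thm:u_cont2} uses uniform $L^2$ bounds on optimal controls. These hold by Remark~\ref{rmk:OT_unif}, since $\ell$ and $g$ are bounded by $K$, which also ensures that the optimal trajectories stay in a bounded set where the local modulus applies.
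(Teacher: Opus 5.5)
Your proposal is correct and follows the paper's route. The paper states the corollary as an immediate consequence of Theorem~\ref{sec:setting-notation-6}, which gives the relaxed equilibrium $\mu$, with $m(t)=e_t\sharp\mu\in C^{1/2}([0,T];\cP_1(\Sigma))$ taken from the remark on $\cP(\Gamma_C)$, and of Theorem~\ref{thm:u_cont2} applied with $\ell=L[m(\cdot)](\cdot,\cdot)$ and $g=G[m(T)]$. Your explicit H\"older estimate, the joint continuity and boundedness of $\ell$, and the uniform $L^2$ bound on optimal controls only fill in details the paper leaves implicit.
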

\begin{remark}
We refer the reader to Section~\ref{sect:ex} for examples where the dynamics and the set~$\Sigma$ fulfill our hypotheses and we obtain the existence of a mild solution.
\end{remark}

\begin{remark}
As in classical cases, we obtain the uniqueness of the mild solution under some monotonicity property. We say that $F:\Sigma\times\cP(\Sigma)\to \R$ is monotone if, for any $m_1,m_2\in \cP(\Sigma)$, there holds $\int_{\Sigma} (F(x,m_1)-F(x,m_2))(m_1-m_2)(dx)\ge 0$. Furthermore, we say that~$F$ is strict monotone when: $\int_{\Sigma} (F(x,m_1)-F(x,m_2))(m_1-m_2)(dx)=0$ if and only if $F(\cdot,m_1)\equiv F(\cdot,m_2)$.
 If for all $t$, $(x,m)\mapsto L[m](x,t)$ and $(x,m)\mapsto G[m](x)$ are strictly monotone, then it can be proved with the same arguments as in \cite[Theorem 4.1 and Remark 4.1]{CC}, that if $(u_1,m_1)$ and $(u_2,m_2)$ are mild solutions respectively associated with two relaxed equilibria $\mu_1$ and $\mu_2$,  then $u_1=u_2$. Under a more restrictive monotonicity assumption on $L$, it can also be proved that $m_1=m_2$.
\\
It is worth noticing that the uniqueness of the mild solution does not imply the uniqueness of the relaxed MFG equilibrium as shown in \cite[Example 3.3]{24SIAM}.
\end{remark}
\noindent{\bf Acknowledgements.} 
The first author is member of GNAMPA-INdAM and was partially supported by the MUR Excellence Department Project 2023-2027 MatMod@Tov (CUP E83C23000330006) awarded to the Department of Mathematics, University of Rome Tor Vergata and  by the Project "Stability in Analysis and Dynamics" of the University of Rome Tor Vergata CUP E83C25000590005.

The second and the third author are members of GNAMPA-INdAM and were partially supported also by the PRIN 2022 project ''PDEs and optimal control methods in mean field games, population dynamics and multi-agent models'' and by
the KAUST CRG Project "Free Boundary Problems, Mean Field Games, Crowd Motion and Lipschitz Learning: The Infinity-
Laplacian in Action".

The fourth author is partially supported by ANR (Agence Nationale de la Recherche) through project COSS, ANR-22-CE40-0010-01.

\end{document}